\newcommand{\RR}{{\mathbb R}}
\newcommand{\CC}{{\mathbb C}}
\newcommand{\ZZ}{{\mathbb Z}}
\newcommand{\NN}{{\mathbb N}}
\newcommand{\mcS}{\mathcal{S}}
\def\bege{\begin{equation}} \def\ende{\end{equation}}
\def\begr{\begin{eqnarray}} \def\endr{\end{eqnarray}}
\newtheorem{theorem}{\hspace{2em}Theorem}
\newtheorem{lemma}{\hspace{2em}Lemma}
\newtheorem*{thA}{\hspace{2em}Theorem A}
\newtheorem*{thB}{\hspace{2em}Theorem B}
\newcommand{\TT}{{\mathbb T}}
\def\CC{ \mathbb{C}}
\newcommand{\DD}{{\mathbb D}}
\def\B{\mathcal{B}}
\def\R{\mathcal{R}}
\def\T{\mathcal{T}}
\def\D{\mathbb{D}}
\def\N{\mathbb N}
\def\hD{\hat{\mathcal{D}}}
\def\om{\omega}
\def\p{{\prime}}
\def\begr{\begin{eqnarray}} \def\endr{\end{eqnarray}}
\def\msk{\medskip}
\def\ol{\overline}
\newtheorem{Corollary}{Corollary}
\begin{document}
\title[]{The generalized Volterra  integral operator  and  Toeplitz operator  on weighted Bergman spaces}
 \author{ Juntao Du,    Songxiao Li$\dagger$ and Dan Qu}

 \address{Juntao Du\\ Department of mathematics, Shantou University, Shantou, Guangdong,  515063, China.}
 \email{jtdu007@163.com  }

 \address{Songxiao Li\\ Institute of Fundamental and Frontier Sciences, University of Electronic Science and Technology of China,
 610054, Chengdu, Sichuan, P.R. China. }
 \email{jyulsx@163.com}

\address{Dan Qu \\  Faculty of Information Technology, Macau University of Science and Technology, Avenida Wai Long, Macau, China }
\email{qd4027@163.com}

 \subjclass[2010]{30H20, 47B38, 47B35 }
 \begin{abstract}
 We study the boundedness and compactness of the generalized Volterra integral operator on weighted Bergman spaces with doubling weights  on the unit disk. A generalized   Toeplitz operator is defined and  the boundedness, compactness and Schatten class of this operator are investigated  on the Hilbert weighted Bergman space. As an application,    Schatten class membership of   generalized Volterra integral operators  are also characterized. Finally, we also get the characterizations of Schatten class membership of generalized   Toeplitz operator  and generalized Volterra integral operators on the Hardy space $H^2$.
 \thanks{$\dagger$ Corresponding author.}
 \vskip 3mm \noindent{\it Keywords}: Bergman space; Volterra integral operator; Toeplitz operator
 \end{abstract}
 \maketitle

\section{Introduction}

Let $\D$ be the open unit disk of  complex plane $\CC$ and $H(\D)$ be the space of all analytic functions on $\D$. A function  $\om: \D \rightarrow [0,\infty)$ is called a weight if it is  positive and integrable. $\om$ is radial if $\om(z)=\om(|z|)$ for all $z\in\DD$.

 Let  $\om$ be a  radial weight. For $r\in [0,1)$, set $$\hat{\om}(r)=\int_r^1 \om(s)ds.$$
We say that $\om$ is a doubling weight, denoted by $\om\in \hD$, if  there is a constant $C>0$ such that
$\hat{\om}(r)<C\hat{\om}(\frac{1+r}{2})$.
 $\om$ is called a regular weight, denoted by $\om \in \R$,  if    there is a constant $C>0$ and $0<\delta<1$ such that
$$\frac{1}{C}<\frac{\hat{\om}(r)}{(1-r)\om(r)}<C, \,\,\mbox{ whenever }\,\,\delta< r<1.$$
  From \cite{PjaRj2014book}, we see that $\R \subset \hD $ and  $\hat{\om}\in\R$ if $\om\in \hD$. Moreover, for $\om\in\hD$ and any given $\varepsilon \in (0,1)$,
$$\om^*(z)\approx (1-|z|)\hat{\om}(z),~~~ \varepsilon<|z|<1.$$
  Here
$$\om^*(z)=\int_{|z|}^1 s\om(s)\log \frac{s}{|z|}ds,\,\,\,z\in\D\backslash\{0\}.$$
See \cite{Pja2015,PjaRj2014book} for more details about $\R$  and $\hD$.

 Let $d\sigma$ denote the normalized    length  measures on   $\TT$, the boundary of $\D$. For $0<p<\infty$, the  Hardy space $H^p$  consists of all functions $f\in H(\D)$ such that
 $$\|f\|^p_{H^p}=\sup_{0<r<1} \int_{\TT}|f(r\xi)|^pd\sigma(\xi) <\infty.$$


Let $0<p<\infty$ and $\om\in\hD$. The weighted Bergman space $A_\om^p$, induced by doubling weight $\om$,  consists of all functions $f \in H(\D)$ such that
$$\|f\|_{A_\om^p}=\left(\int_{\D}|f(z)|^p\om(z)dA(z)\right)^\frac{1}{p}<\infty,
  $$
 where $dA $ is the normalized Lebesgue area measure on $\D$.
When $\om(z)=(1-|z|^2)^\alpha(\alpha>-1)$, the space $A_\om^p$ becomes the classical weighted Bergman space $A_\alpha^p$.
When $\alpha=0$, we will write $A_0^p$ as $A^p$.

 The Bloch space, denoted by $\B$,   is the space of all $f\in H(\DD)$ such that
$\sup_{z\in\DD}(1-|z|^2)|f'(z)|<\infty.$
The little Bloch space $\B_0,$ consists of all $f\in\B$ such that
$\lim_{|z|\to 1}(1-|z|^2)|f'(z)|=0.$
For $\om\in\hD$, let   $\mathcal{C}^1(\om^*)$ denote the space consisting of all $g\in H(\D)$ such that
\begin{align*}
\sup_{a\in\D}\frac{\int_{S_a}|g^\p(z)|^2\om^*(z)dA(z)}{\om(S_a)}<\infty.
\end{align*}
Here
$S_a=\left\{re^{\mathrm{i\theta}}:|a|<r<1, |\arg a-\theta|<\frac{1-|a|}{2}\right\}$
is  a Carleson square at $a\in\D$. We say that $g\in\mathcal{C}_0^1(\om^*)$ if   $g\in\mathcal{C}^1(\om^*)$ and
$$\lim_{|a|\to 1}\frac{\int_{S_a}|g^\p(z)|^2\om^*(z)dA(z)}{\om(S_a)}=0.$$
From \cite[Proposition 5.1]{PjaRj2014book}, we see that $\mathcal{C}^1(\om^*)\subseteq \B$  and  $C_0^1(\om^*)\subseteq \B_0$ when $\om\in\hD$. In particularly, $C^1(\om^*)=\B$ and $C_0^1(\om^*)=\B_0$ when $\om\in \R$.

Suppose  $g\in H(\D)$. The integral operator $T_g$,
called the Volterra   integral operator, is defined  by
\begr
T_gf(z)=\int_0^z f(\xi)g'(\xi)d\xi , ~ \qquad~~ ~f\in H(\D), ~~  z\in \D. \nonumber
\endr
 The operator $T_g$ was first introduced  by Pommerenke in \cite{Pc1977cmh}.  He showed that $T_g$  is   bounded on the Hardy space  $H^2$ if and only if $g \in BMOA$, the bounded mean oscillation analytic function space. 
In \cite{AaSa1997iumj}, Aleman and Siskakis proved that $T_g$ is bounded on  Bergman space $A^p$  if and only if   $g\in\B$.
In \cite{AaCj2001jam}, Aleman and Cima completely characterized the boundedness of Volterra operator on Hardy spaces.
 In particular, they showed that $T_g$ is bounded on $H^p$ if and only if $g\in BMOA$.
More results about the Volterra operator on some analytic function spaces can be seen in \cite{AaSa1995cv,Hz2004jmaa,HzLj2018jga,ls1, ls2,MsPjPaWm2019jfa,Pj2016jfa,PjaRj2014book,zhu} and the referees therein.

In \cite{PjaRj2014book}, Pel\'aez and R\"atty\"a characterized the boundedness, compactness and Schatten class of $T_g$ between Bergman spaces induced by rapidly increasing weights.
Using Theorem 5 in \cite{KtRj2019mz}, those results can be  extended  to $\om\in \hD$ as follows.

\begin{thA}
Let $0<p,q<\infty, \om\in\hD$ and $g\in H(\D)$. 
\begin{enumerate}[(i)]
  \item If $0<p<q<\infty$,  then $T_g:A_\om^p\to A_\om^q$ is bounded (compact) if and only if $$\sup\limits_{\frac{1}{2}<|z|<1}\frac{(1-|z|)|g^\p(z)|}{(\om^*(z))^{\frac{1}{p}-\frac{1}{q}}}<\infty
      \left(\lim\limits_{|z|\to 1}\frac{(1-|z|)|g^\p(z)|}{(\om^*(z))^{\frac{1}{p}-\frac{1}{q}}}=0\right)
      .$$
   \item $T_g:A_\om^p\to A_\om^p$ is bounded (compact) if and only if $g\in\mathcal{C}^1(\om^*)$ ($g\in\mathcal{C}_0^1(\om^*)$).
  \item If $0<q<p<\infty$, then the following conditions are equivalent:
  \begin{enumerate}
    \item $T_g:A_\om^p\to A_\om^q$ is bounded;
    \item $T_g:A_\om^p\to A_\om^q$ is compact;
    \item $g\in A_\om^s$, where $\frac{1}{s}=\frac{1}{q}-\frac{1}{p}$.
  \end{enumerate}
\end{enumerate}
\end{thA}

In 2020,  Chalmouks \cite{Cn2020pams} introduced a natural generalization of the Volterra operator,
that is, for any given $g\in H(\D)$ and $k,n\in\ZZ$ such that $0\leq k<n<\infty$,
$$T_{g}^{n,k} f(z)= I^n(f^{(k)}g^{(n-k)})(z), \,\,\,\,\forall f\in H(\D) \mbox{ and } z\in\D.$$
Here,  $I^n$ is the $n$-th iteration of the integration operator $I f(z)=\int_0^z f(t)dt$.
Obviously, when $n=1$ and $k=0$, $T_g^{1,0}=T_g$.
%
%
The following result can be deduced from \cite{Cn2020pams} directly.
\begin{thB}
Let $0<p,q<\infty, k,n\in\mathbb{Z}$ satisfying $0\leq k<n$, and $g\in H(\D)$. 
\begin{enumerate}[(i)]
  \item If $0<p<q<\infty$,   $T_g^{n,k}:H^p\to H^q$ is bounded  if and only if
      $$\sup\limits_{z\in\D}(1-|z|^2)^{\frac{1}{q}-\frac{1}{p}+n-k}|g^{(n-k)}(z)|<\infty.$$
  \item  $T_g^{n,0}: H^p\to H^p$ is bounded if and only if $g\in BMOA$ .
  \item If $k\geq 1$, $T_g^{n,k}: H^p\to H^p$ is bounded  if and only if $g\in\B$.
  \item If  $0<q<p<\infty$ and  $\frac{1}{s}=\frac{1}{q}-\frac{1}{p}$,  $g\in H^s$ implies the boundedness of $T_g^{n,k}:H^p\to H^q$;
  the boundedness of $T_g^{2,0}:H^p\to H^q$ implies that $g\in H^s$.
\end{enumerate}
\end{thB}

In this paper, motivated by  \cite{Cn2020pams,PjaRj2014book}, we study the boundedness, compactness and Schatten class membership of  the generalized Volterra operator $T_g^{n,k}$ between $A_\om^p$ and $A_\om^q$ when $\om\in\hD$ and $0<p,q<\infty$. Moreover, we define a new operator, i.e., the generalized   Toeplitz operator. We study the boundedness, compactness and Schatten class membership of  the generalized  Toeplitz operator on $A_\om^2$ and  on the Hardy space $H^2$.

The paper is organized as follows. In Section 2, we state  some preliminary results. In Section 3, the boundedness and compactness of $T_g^{n,k}:A_\om^p\to A_\om^q$ are investigated when either $0<p\leq q<\infty$ or $2\leq q<p<\infty$ and $k=0$. In Section 4, the boundedness, compactness and Schatten $p$-class of generalized Toeplitz operators on $A_\om^2$ are studied. As an application, we obtain the characterization of  Schatten class generalized Volterra operator. In the last section, we  also investigate  the  Schatten class generalized   Toeplitz operator  and generalized Volterra integral operator  on the Hardy space $H^2$.

Throughout this paper, the letter $C$ will denote  constants and may differ from one occurrence to the other.
The notation $A \lesssim B$ means that there is a positive constant C such that $A\leq CB$.
The notation $A \approx B$ means $A\lesssim B$ and $B\lesssim A$.\msk

\section{Preliminaries}

In this section, we state some lemmas for the proof of our main results. First, we introduce some notations.
Let
$$\Gamma_z=\left\{re^{\mathtt{i}\theta}\in\D: |\theta-\arg z|<\frac{1}{2}\left(1-\frac{r}{|z|}\right)\right\},\,\,z\in\ol{\D}\backslash\{0\}$$
and
$$T_u=\{z\in\D:u\in\Gamma_z\}, \,\,\,u\in\D.$$
By a calculation, we have
\begin{align}\label{0823-1}
T_u\subset S_u  ~~\mbox{ and }~~  \om(T_u)\approx \om(S_u)\approx (1-|u|)\hat{\om}(u).
\end{align}

\begin{lemma}[\cite{PjaRj2014book}]\label{0413-2}
Suppose $0<p<\infty$, $n\in\N$ and $f\in H(\D)$. Let $\om$ be a radial weight. Then
\begin{align*}
\|f\|_{A_\om^p}^p &=\om(\D)|f(0)|^p  + p^2 \int_\D |f(z)|^{p-2}|f^\p(z)|^2\om^*(z)dA(z)   \\
&\approx \sum_{k=0}^{n-1}|f^{(k)}(0)|^p+\int_\D \left(\int_{\Gamma_z}|f^{(n)}(u)|^2\left(1-\frac{|u|}{|z|}\right)^{2n-2}dA(u)\right)^\frac{p}{2}\om(z)dA(z)\\
&\approx \int_\D |(Nf)(z)|^p\om(z)dA(z).
\end{align*}
Here,  $(Nf)(z)=\sup\limits_{u\in\Gamma_z}|f(u)|$.
\end{lemma}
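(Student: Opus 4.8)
The plan is to prove the three assertions in order: the first is an exact Littlewood--Paley-type identity, from which the two equivalences are then extracted. \emph{For the identity}, note that since $f$ is analytic, $|f|^p$ is subharmonic and $\Delta(|f|^p)=p^2|f|^{p-2}|f'|^2$ on $\D$, first classically away from the zeros of $f$ and then as distributions --- near a zero of order $m$ both sides behave like a constant times $|z-z_0|^{mp-2}$, which is locally integrable, so no Dirac masses appear. Applying the Riesz (Green--Jensen) formula to $|f|^p$ on $\{|w|<r\}$ gives, for each $r\in(0,1)$,
\begin{align*}
\frac1{2\pi}\int_0^{2\pi}|f(re^{i\theta})|^p\,d\theta=|f(0)|^p+\frac{p^2}{2\pi}\int_{|w|<r}|f(w)|^{p-2}|f'(w)|^2\log\frac r{|w|}\,dA_0(w),
\end{align*}
where $dA_0=\pi\,dA$ is the unnormalized area measure. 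Multiplying by $2r\om(r)$, integrating over $r\in(0,1)$ and applying Fubini, the left side becomes $\|f\|_{A_\om^p}^p$, the first term becomes $\om(\D)|f(0)|^p$, and the double integral collapses to $p^2\int_\D|f(w)|^{p-2}|f'(w)|^2\big(\int_{|w|}^1 s\om(s)\log\frac s{|w|}\,ds\big)\,dA(w)$, whose inner factor is exactly $\om^*(w)$; a dilation $f\mapsto f(\rho\,\cdot)$, $\rho\to1^-$, justifies the manipulations for every $f\in A_\om^p$.

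\emph{For the maximal-function equivalence}, the lower bound $\int_\D|(Nf)(z)|^p\om(z)\,dA(z)\gtrsim\|f\|_{A_\om^p}^p$ is immediate since $z\in\ol{\Gamma_z}$ forces $(Nf)(z)\ge|f(z)|$. For the reverse inequality, fix a small $\rho\in(0,1)$; for $u\in\Gamma_z$, subharmonicity of $|f|^p$ gives $|f(u)|^p\lesssim(1-|u|)^{-2}\int_{D(u,\rho(1-|u|))}|f|^p\,dA_0$, and as $u$ ranges over $\Gamma_z$ these disks stay inside a fixed dilate of $\Gamma_z$. Taking the supremum over $u$, integrating against $\om(z)\,dA(z)$, interchanging the order of integration and using \eqref{0823-1} together with $\om^*(w)\approx(1-|w|)\hat{\om}(w)$, the estimate reduces to $\int_\D|f|^p\om\,dA$; this is the standard proof that the lens-type nontangential maximal operator is bounded on $L^p(\om)$, carried out in \cite{PjaRj2014book}.

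\emph{For the higher-order area integral}, I would induct on $n$. The case $n=1$ is the bridge between the first two parts: Fubini, together with $T_u\subset S_u$, $\om(T_u)\approx(1-|u|)\hat{\om}(u)$ from \eqref{0823-1} and $\om^*(u)\approx(1-|u|)\hat{\om}(u)$, converts $\int_\D|f(z)|^{p-2}|f'(z)|^2\om^*(z)\,dA(z)$ into a quantity comparable to $\int_\D\big(\int_{\Gamma_z}|f'(u)|^2\,dA(u)\big)^{p/2}\om(z)\,dA(z)$, once the factor $|f(z)|^{p-2}$ is controlled --- by $(Nf)(z)^{p-2}$ and the maximal bound when $p\ge2$, and by absorbing it into a local area average (again by subharmonicity of $|f|^p$) when $0<p<2$ --- the term $|f(0)|^p$ arising from the neighbourhood of the origin. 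For the inductive step one applies the $n=1$ equivalence and the identity of the first part to $f^{(n-1)}$, and compares $\int_{\Gamma_z}|f^{(n)}(u)|^2(1-|u|/|z|)^{2n-2}\,dA(u)$ with $\int_{\Gamma_z}|f^{(n-1)}(u)|^2(1-|u|/|z|)^{2n-4}\,dA(u)$ by the same mechanism; each step produces one boundary value $|f^{(n-1)}(0)|^p$, so after $n$ iterations one collects $\sum_{k=0}^{n-1}|f^{(k)}(0)|^p$.

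\emph{The main obstacle} is the last part: one must pass back and forth between weighted area integrals of consecutive derivatives while keeping the powers of $(1-|u|/|z|)$ and the boundary data correctly aligned, and the range $0<p<2$ cannot use H\"older's inequality and must instead be handled through subharmonicity of $|f|^p$ and the geometric interplay of $\Gamma_z$, $T_u$ and $S_u$ recorded in \eqref{0823-1}. Everything rests on the elementary comparisons $\om^*(z)\approx(1-|z|)\hat{\om}(z)$ and $\om(S_z)\approx\om(T_z)\approx(1-|z|)\hat{\om}(z)$; granted these, the remainder is a lengthy but routine bookkeeping argument as in \cite{PjaRj2014book}.
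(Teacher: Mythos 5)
This lemma is quoted from Pel\'aez--R\"atty\"a's memoir \cite{PjaRj2014book} (it combines their Hardy--Stein type identity, the $n$-th order Littlewood--Paley formula, and the non-tangential maximal function estimate); the paper itself offers no proof, so there is no internal argument to compare yours against. On its own terms, your treatment of the exact identity is complete and correct: the distributional identity $\Delta|f|^p=p^2|f|^{p-2}|f'|^2$ (with the observation that $|z-z_0|^{mp-2}$ is locally integrable near a zero of order $m$, so no singular part appears), Green's formula on $\{|w|<r\}$, and integration against $2r\om(r)\,dr$ do produce exactly $\om^*(w)=\int_{|w|}^1 s\om(s)\log\frac{s}{|w|}\,ds$, and the dilation argument legitimizes the Fubini steps.

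The two equivalences, however, are only outlined, and the outline conceals the actual work. First, a hypothesis issue: after your Fubini step in the maximal-function bound, $\om(z)\,dA(z)$ is replaced by a quantity comparable to $\om(T_w)/(1-|w|)^2\approx\hat{\om}(w)/(1-|w|)$, not by $\om(w)$; identifying $\int_\D|f|^p\hat{\om}(w)(1-|w|)^{-1}dA(w)$ with $\|f\|_{A_\om^p}^p$ requires passing to the associated regular weight and uses $\om\in\hD$ (as does \eqref{0823-1} itself), so the step does not ``reduce to $\int_\D|f|^p\om\,dA$'' for an arbitrary radial weight as you claim. Second, and more substantively, the $n$-th order cone square-function equivalence is precisely the content of the cited theorem, and ``induct and compare consecutive derivatives by the same mechanism'' is not yet an argument: for $0<p<2$ the factor $|f|^{p-2}$ cannot be pulled out, and the passage from $\int_{\Gamma_z}|f^{(n-1)}(u)|^2(1-|u|/|z|)^{2n-4}dA(u)$ to $\int_{\Gamma_z}|f^{(n)}(u)|^2(1-|u|/|z|)^{2n-2}dA(u)$ inside an $L^{p/2}(\om)$-norm requires genuine tent-space/area-theorem input (in the memoir this is done by reducing to the Hardy-space Littlewood--Paley theory on circles and then integrating radially, much as Lemma \ref{0417-1} is used later in this paper). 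You have correctly located the obstacle but not overcome it; as written, the proposal proves the first line and defers the two equivalences to the reference.
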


Let $f\in H(\D)$ and $k\in\N$. The Paley-Littlewood $G_k$-function of order $k$  is defined by
$$G_k(f)(e^{i\theta})=\left(\int_0^1 |f^{(k)}(re^{i\theta})|^2(1-r)^{2k-1}dr\right)^\frac{1}{2}.$$
In \cite{Cn2020pams}, Chalmoukis proved the following result, which gave an equivalent norm on $H^p$.

\begin{lemma}\label{0417-1}
Let $k\in\N$,  $0<p<\infty$ and $f\in H(\D)$ with $f^{(i)}(0)=0$, $0\leq i <k$. Then,
$$ \|f\|_{H^p}^p\approx \int_0^{2\pi} \left(G_k(f)(e^{i\theta})\right)^pd\theta.$$
\end{lemma}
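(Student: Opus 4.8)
The plan is to prove the two-sided estimate by induction on $k$, the only non-elementary ingredient being the classical square-function characterizations of the Hardy space $H^p$; the order of the square function will be raised by a one-dimensional weighted Hardy inequality, while the passage between radial and nontangential (area) integrals will be carried out by subharmonicity and Fubini's theorem. For $k\geq1$, $f\in H(\D)$ and $\zeta\in\TT$, write $A_k(f)(\zeta)^2=\int_{\Gamma_\zeta}|f^{(k)}(u)|^2(1-|u|)^{2k-2}\,dA(u)$ for the area integral of order $k$; the exponent $2k-2$ is chosen so that, since the aperture of $\Gamma_\zeta$ at level $|u|$ is comparable to $1-|u|$, a radial slice of this integral reproduces the weight $(1-r)^{2k-1}$ of $G_k$. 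For $u\in\D$, let $D_u=\{w:|w-u|<(1-|u|)/4\}$, and abbreviate $\|G\|_{L^p}=\big(\int_0^{2\pi}G(e^{i\theta})^p\,d\theta\big)^{1/p}$.

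I would first prove $\|G_k(f)\|_{L^p}\lesssim\|f\|_{H^p}$ for $f$ with $f(0)=0$. By the sub-mean-value property of $|f^{(k)}|^2$, $|f^{(k)}(re^{i\theta})|^2\lesssim(1-r)^{-2}\int_{D_{re^{i\theta}}}|f^{(k)}|^2\,dA$; substituting this into $G_k(f)(e^{i\theta})^2$ and applying Fubini's theorem turns the radial integral into an integral over a slightly wider Stolz angle at $e^{i\theta}$, giving the pointwise bound $G_k(f)\lesssim A_k(f)$ on $\TT$. Next, writing $f^{(j)}=(f^{(j-1)})'$, a Cauchy estimate followed by the sub-mean-value property gives $|f^{(j)}(u)|^2\lesssim(1-|u|)^{-4}\int_{D_u^{*}}|f^{(j-1)}|^2\,dA$ for a fixed concentric dilate $D_u^{*}$ of $D_u$; inserting this in $A_j(f)$ and using Fubini again yields $A_j(f)\lesssim A_{j-1}(f)$ up to an enlargement of the aperture. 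Since the $L^p$ norm of an area integral is, up to a constant, independent of the aperture, iteration gives $\|A_k(f)\|_{L^p}\lesssim\|A_1(f)\|_{L^p}$, and $\|A_1(f)\|_{L^p}\lesssim\|f\|_{H^p}$ is the classical area-integral estimate. Chaining these, $\|G_k(f)\|_{L^p}\lesssim\|f\|_{H^p}$.

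For the reverse inequality $\|f\|_{H^p}\lesssim\|G_k(f)\|_{L^p}$ under $f^{(i)}(0)=0$ for $0\leq i<k$, I would induct on $k$. The case $k=1$, i.e. $f(0)=0\Rightarrow\|f\|_{H^p}\lesssim\|G_1(f)\|_{L^p}$, is the other half of the classical Littlewood--Paley $g$-function theorem. Assuming the statement at level $k$, let $f$ satisfy $f^{(i)}(0)=0$ for $0\leq i\leq k$ and put $h=f^{(k)}$, so $h(0)=0$ and hence $|h(re^{i\theta})|\leq\int_0^r|h'(te^{i\theta})|\,dt$ for every $\theta$. The one-dimensional weighted Hardy inequality $\int_0^1(1-r)^{\beta}\big(\int_0^r\varphi(t)\,dt\big)^2dr\lesssim\int_0^1(1-t)^{\beta+2}\varphi(t)^2\,dt$, valid for every $\beta>-1$ by the Muckenhoupt criterion, applied with $\beta=2k-1$ yields, pointwise in $\theta$,
$$G_k(f)(e^{i\theta})^2=\int_0^1|h(re^{i\theta})|^2(1-r)^{2k-1}\,dr\;\lesssim\;\int_0^1|h'(te^{i\theta})|^2(1-t)^{2k+1}\,dt=G_{k+1}(f)(e^{i\theta})^2.$$
Thus $\|G_k(f)\|_{L^p}\lesssim\|G_{k+1}(f)\|_{L^p}$, and since $f$ has vanishing of order $k$ at the origin, the inductive hypothesis gives $\|f\|_{H^p}\lesssim\|G_k(f)\|_{L^p}\lesssim\|G_{k+1}(f)\|_{L^p}$, closing the induction. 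Together with the previous step --- and noting that the finitely many initial Taylor coefficients, invisible to the square functions, all vanish by hypothesis --- this gives $\|f\|_{H^p}^p\approx\int_0^{2\pi}(G_k(f)(e^{i\theta}))^p\,d\theta$.

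The genuinely deep inputs are exactly the two classical square-function estimates invoked above, namely $\|A_1(f)\|_{L^p}\lesssim\|f\|_{H^p}$ and $\|f\|_{H^p}\lesssim\|G_1(f)\|_{L^p}$ for $f$ with $f(0)=0$: for $0<p\leq1$ these rely on the good-$\lambda$ (Burkholder--Gundy) technology rather than on elementary Fubini and Minkowski manipulations, and even for $1<p<\infty$ they are not entirely trivial. All the ingredients specific to the higher order $k$ --- the induction, the weighted Hardy inequality, the subharmonicity/Cauchy-estimate conversions between radial and area integrals, and the aperture change --- are routine. (Alternatively one might try to read the area-integral characterization of $H^p$ off Lemma~\ref{0413-2} by letting the doubling weight $\om$ concentrate on $\TT$, but making that limit rigorous requires uniform estimates as $\om$ degenerates, so it seems cleaner to cite the classical $H^p$ theory directly.)
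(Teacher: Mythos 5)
The paper does not prove this lemma at all: it is quoted verbatim from Chalmoukis \cite{Cn2020pams}, so there is no in-paper argument to compare against. Your proof is correct and is essentially the standard route to the higher-order Littlewood--Paley characterization: the direction $\|G_k(f)\|_{L^p}\lesssim\|f\|_{H^p}$ via subharmonicity, Fubini and the aperture-independence of area integrals, reducing to the classical bound $\|A_1(f)\|_{L^p}\lesssim\|f\|_{H^p}$; and the converse by induction on $k$, where the pointwise inequality $G_k(f)\lesssim G_{k+1}(f)$ follows from the one-dimensional weighted Hardy inequality with $\beta=2k-1>-1$ (your Muckenhoupt verification is right), so that the whole burden falls on the classical $k=1$ case $\|f\|_{H^p}\lesssim\|G_1(f)\|_{L^p}$ for $f(0)=0$. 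You correctly flag that the two classical inputs are the only nontrivial ingredients; for $0<p\le 1$ they are due to Calder\'on and Flett (see also Pavlovi\'c's book), and citing them is legitimate since the lemma itself is being imported from the literature.
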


In \cite{PjaRjSk2015mz}, Pel\'aez and R\"atty\"a estimated the norm of identity operator $I_d:A_\om^p\to L_\mu^q$.
In \cite{PjaRj2015}, Pel\'aez and R\"atty\"a completely described the boundedness and compactness of    $D^k: A_\om^p\to L_\mu^q$.
By the proof of Lemma 8 and Proposition 2 in \cite{PjaRj2015}, it is easy to estimate $\|D^k\|_{A_\om^p\to L_\mu^q}$ when $k\in\N\cup \{0\}$ and $0<p<q<\infty$.
As usual, $D^0=I_d$.
Now, we can state some results in \cite{PjaRj2015,PjaRjSk2015mz} as follows.

\begin{lemma} \label{0516-1}
Let $0<p\leq q<\infty$, $k\in \N\cup\{0\}$, $\om\in\hD$ and $\mu$ be a positive Borel on $\D$.
\begin{enumerate}[(i)]
  \item If $0<p<q<\infty$, then, for any fixed $r\in(0,\infty)$,
      $$\|D^k\|_{A_\om^p\to L_\mu^q}^q \approx\sup_{z\in\D} \frac{\mu(\D(z,r))}{(1-|z|)^{qk}\om(S_z)^\frac{q}{p}}<\infty.$$
Here  $\D(z,r)$ is the Bergman disk.
  \item If $0<q<p<\infty$, then
  $$\|I_d\|_{A_\om^p\to L_\mu^q}^q\approx
  \left(\int_\D \left(\int_{\Gamma_z}\frac{d\mu(\xi)}{\om(T_\xi)}\right)^\frac{p}{p-q}\om(z)dA(z)\right)^\frac{p-q}{p}.$$
\end{enumerate}
\end{lemma}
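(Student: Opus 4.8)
Let me think about how I'd reconstruct/justify this lemma, which packages together two norm estimates for $D^k : A_\om^p \to L_\mu^q$.

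The plan is to treat the two regimes separately, in each case reducing to the case $k=0$ (the embedding theorem for $A_\om^p \hookrightarrow L_\mu^q$) by a standard differentiation-versus-integration comparison, and then quote the embedding results of Pel\'aez--R\"atty\"a.

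For part (i), with $0<p<q<\infty$: First I would recall that for $\om\in\hD$ the point evaluation estimate $|f^{(k)}(z)| \lesssim \|f\|_{A_\om^p}/\big((1-|z|)^{k}\,\om(S_z)^{1/p}\big)$ holds; this is essentially the subharmonicity/Cauchy-estimate argument applied on a Bergman disk $\D(z,r)$ combined with $\om(S_z)\approx\om(\D(z,r))$. Using this I get the ``$\gtrsim$'' direction of the upper bound on $\|D^k\|$: evaluating $D^k$ on the normalized reproducing-type test functions $f_z$ (for which $\|f_z\|_{A_\om^p}\approx 1$ and $|f_z^{(k)}(w)|\gtrsim 1/((1-|z|)^k\om(S_z)^{1/p})$ on $\D(z,r)$) forces the supremum to be finite and dominated by $\|D^k\|^q$. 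For the reverse ``$\lesssim$'' direction, I would follow the proof of \cite[Lemma 8 and Proposition 2]{PjaRj2015}: write $\int_\D |f^{(k)}|^q\,d\mu$ as a sum over a Whitney-type lattice $\{z_j\}$, dominate $|f^{(k)}|^q$ on each Bergman disk by its average (Cauchy estimates give $\sup_{\D(z_j,r)}|f^{(k)}| \lesssim (1-|z_j|)^{-k}\sup_{\D(z_j,2r)}|f|$), and then invoke the carleson-measure characterization of the embedding $I_d:A_\om^p\to L_{\mu_k}^q$ where $d\mu_k$ absorbs the factor $(1-|z|)^{-qk}$; the condition that this new measure is $q/p$-Carleson for $A_\om^p$ is exactly the stated supremum. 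The independence of $r\in(0,\infty)$ follows since different aperture lattices give comparable quantities for $\om\in\hD$.

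For part (ii), with $0<q<p<\infty$ and $k=0$: this is a direct citation of \cite{PjaRjSk2015mz}, where the norm of the embedding $I_d:A_\om^p\to L_\mu^q$ is shown to be comparable to the stated tent-space type integral $\big(\int_\D (\int_{\Gamma_z} d\mu(\xi)/\om(T_\xi))^{p/(p-q)}\om(z)\,dA(z)\big)^{(p-q)/p}$. Here I would only need to note that $\om(T_\xi)\approx(1-|\xi|)\hat\om(\xi)\approx\om(S_\xi)\approx\om^*(\xi)/(1-|\xi|)$ by \eqref{0823-1} and the facts recalled in the introduction, so the displayed quantity is the one appearing in the source. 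The key tool behind that estimate is the duality/Carleson-measure machinery for $A_\om^p$ together with the description of $\int_\D (Nf)^p\om$ as an equivalent norm (Lemma~\ref{0413-2}), which reduces the problem to a nonlinear Carleson-embedding inequality handled by a stopping-time / maximal-function argument on the nontangential approach regions $\Gamma_z$.

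The main obstacle here is not conceptual but one of bookkeeping: in part (i) the reduction that converts the $k$-th derivative estimate into a plain embedding for a modified measure requires the local Cauchy estimate $\sup_{\D(z,r)}|f^{(k)}|\lesssim (1-|z|)^{-k}\sup_{\D(z,2r)}|f|$ to be applied uniformly over the lattice, and one must check that replacing $d\mu$ by $(1-|z|)^{-qk}\,d\mu$ on Bergman disks is compatible with the $q/p$-Carleson condition stated with $\om(S_z)^{q/p}$ in the denominator — this is where the doubling property $\om\in\hD$ (and hence $\om(S_z)\approx\om(\D(z,r))$, $\om$ comparable on Whitney boxes up to dilation) is essential. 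Since both statements are explicitly available in the cited papers modulo these routine adaptations, I would present the proof as: (1) state the pointwise derivative estimate for $A_\om^p$, $\om\in\hD$; (2) prove (i) by the lattice decomposition plus the $k=0$ Carleson characterization, noting $r$-independence; (3) obtain (ii) by citing \cite{PjaRjSk2015mz} and rewriting $\om(T_\xi)$ via \eqref{0823-1}. $\square$
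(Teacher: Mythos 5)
The paper gives no proof of this lemma at all: it is stated purely as a packaging of results cited from Pel\'aez--R\"atty\"a (\cite{PjaRjSk2015mz} for the embedding $I_d$ and the proofs of Lemma 8 and Proposition 2 of \cite{PjaRj2015} for $D^k$), and your sketch reconstructs exactly those cited arguments (pointwise Cauchy estimates on Bergman disks to reduce $D^k$ to the $k=0$ Carleson embedding, plus the tent-space characterization for $q<p$). So your proposal is correct and takes essentially the same route the paper relies on.
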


\begin{lemma}[\cite{PjaRj2014book}]\label{0425-1}
Suppose $\om\in\hD$ and $0<p<\infty$. If $\gamma$ is large enough, then the function $F_{a}(z)=\left(\frac{1-|a|}{1-\ol{a}z}\right)^\gamma$ belongs to $ A_\om^p$ and $\|F_a\|_{A_\om^p}^p\approx (1-|a|)\hat{\om}(a)$.
\end{lemma}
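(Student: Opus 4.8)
The plan is to evaluate $\|F_a\|_{A_\om^p}^p$ directly and reduce it to a one‑dimensional estimate for an integral of $\om$ against a negative power of $1-|a|r$, which is then settled by the doubling hypothesis. By definition,
$$\|F_a\|_{A_\om^p}^p=(1-|a|)^{\gamma p}\int_\D\frac{\om(z)\,dA(z)}{|1-\ol{a}z|^{\gamma p}},$$
so everything comes down to proving $\int_\D|1-\ol a z|^{-\gamma p}\om(z)\,dA(z)\approx \hat{\om}(a)(1-|a|)^{1-\gamma p}$. Writing $z=re^{i\theta}$ and invoking the classical asymptotic $\int_0^{2\pi}|1-\rho e^{i\theta}|^{-\beta}\,d\theta\approx(1-\rho)^{1-\beta}$ for $\beta>1$ and $0\le\rho<1$ — this is the first place where $\gamma$ must be large, namely $\gamma p>1$ — the integral reduces to $\int_0^1\om(r)(1-|a|r)^{1-\gamma p}\,dr$. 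Throughout one may assume $|a|$ close to $1$, since otherwise all quantities involved are comparable to positive constants depending only on $\om$ and $p$ (here $\om\in\hD$ is used through the fact that it forces $\hat{\om}(r)>0$ for every $r\in[0,1)$). For the lower bound I would simply restrict the $r$‑integral to $[|a|,1)$, where $1-|a|r\le 1-|a|^2\le 2(1-|a|)$; this gives $\int_0^1\om(r)(1-|a|r)^{1-\gamma p}\,dr\gtrsim\hat{\om}(|a|)(1-|a|)^{1-\gamma p}$ with no use of the doubling property.

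The upper bound is the only genuinely nontrivial step, and the sole place the hypothesis $\om\in\hD$ and the largeness of $\gamma$ are really needed. Split the $r$‑integral as $\int_0^{|a|}+\int_{|a|}^1$; on $[|a|,1)$ one has $1-|a|r\ge 1-|a|$, which disposes of the second piece. For the first piece, put $r_n=1-2^{-n}$, choose the integer $N$ with $|a|\in[r_N,r_{N+1})$ (so $1-|a|\approx 2^{-N}$), and decompose $[0,|a|]$ along the points $r_n$. On $[r_n,r_{n+1}]$ one has $1-|a|r\ge 1-r\ge 2^{-(n+1)}$ and $\int_{r_n}^{r_{n+1}}\om\le\hat{\om}(r_n)$, while the defining inequality $\hat{\om}(r)\le C\hat{\om}(\tfrac{1+r}{2})$ — note $\tfrac{1+r_n}{2}=r_{n+1}$ — iterates to $\hat{\om}(r_n)\le C^{N-n+1}\hat{\om}(|a|)$. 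Substituting and reindexing, the first piece is dominated by $\hat{\om}(|a|)\,2^{N(\gamma p-1)}\sum_{j\ge 0}\bigl(C\,2^{-(\gamma p-1)}\bigr)^j$; the geometric series converges precisely when $2^{\gamma p-1}>C$, that is, when $\gamma$ exceeds a threshold determined by the doubling constant of $\om$, and in that case the first piece is also $\lesssim\hat{\om}(|a|)(1-|a|)^{1-\gamma p}$.

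Putting the two bounds together yields $\int_0^1\om(r)(1-|a|r)^{1-\gamma p}\,dr\approx\hat{\om}(|a|)(1-|a|)^{1-\gamma p}$, and multiplying back by $(1-|a|)^{\gamma p}$ gives $\|F_a\|_{A_\om^p}^p\approx(1-|a|)\hat{\om}(a)$; since the right‑hand side is finite, $F_a\in A_\om^p$. The main obstacle is the dyadic summation in the upper bound, together with keeping track of exactly how large $\gamma$ must be in terms of the doubling constant of $\om$; the reduction to the radial integral and the lower bound are routine.
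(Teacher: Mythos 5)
Your proof is correct and follows the standard argument for this lemma (which the paper only cites from Pel\'aez--R\"atty\"a without reproducing a proof): reduce to the radial integral $\int_0^1\om(r)(1-|a|r)^{1-\gamma p}\,dr$ via the classical angular estimate, get the lower bound by restricting to $[|a|,1)$, and get the upper bound by a dyadic decomposition at $r_n=1-2^{-n}$ in which the doubling constant competes with $2^{\gamma p-1}$. The bookkeeping ($\hat\om(r_n)\le C^{N-n+1}\hat\om(|a|)$, convergence of the geometric series for $2^{\gamma p-1}>C$) is accurate, so there is nothing to add.
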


The following lemma is always used to investigate the compactness of linear operators between Bergman spaces induced by doubling weights. See \cite[Lemma 3.3]{CmPjPcRj2016jfa} for example.
\begin{lemma}\label{0425-2}
Suppose $\om\in\hD$ and $0<p,q<\infty$. If $T:A_\om^p\to A_\om^q$ is bounded and linear, $T$ is compact
if and only if $\lim\limits_{j\to\infty}\|Tf_j\|_{A_\om^q}=0$ whenever $\{f_j\}$ is bounded in $A_\om^p$ and converges to 0 uniformly on any given compact subset of $\D$.
\end{lemma}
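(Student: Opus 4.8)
The plan is to prove both implications from three standard facts about $A_\om^p$ with $\om\in\hD$: (i) the pointwise estimate $|f(z)|\lesssim\|f\|_{A_\om^p}\big((1-|z|)\hat{\om}(z)\big)^{-1/p}$, which follows from subharmonicity of $|f|^p$ together with \eqref{0823-1}; (ii) as a consequence, the point evaluations are bounded on both $A_\om^p$ and $A_\om^q$, so norm convergence in $A_\om^q$ forces uniform convergence on compact subsets of $\D$, and every bounded subset of $A_\om^p$ is a normal family; (iii) Fatou's lemma, which gives $\|f\|_{A_\om^p}\le\liminf_j\|f_j\|_{A_\om^p}$ whenever $f_j\to f$ uniformly on compacta.

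\textbf{Sufficiency.} Assume the sequential criterion holds and let $B$ be the closed unit ball of $A_\om^p$; take any sequence $\{f_j\}\subset B$. By (ii) and Montel's theorem some subsequence $\{f_{j_k}\}$ converges uniformly on compacta to some $f\in H(\D)$, and by (iii) $f\in A_\om^p$. Then $\{f_{j_k}-f\}$ is bounded in $A_\om^p$ and tends to $0$ uniformly on compacta, so the hypothesis yields $\|T(f_{j_k}-f)\|_{A_\om^q}\to0$, i.e. $Tf_{j_k}\to Tf$ in $A_\om^q$. Hence every sequence in $T(B)$ has a norm-convergent subsequence, so $T$ is compact.

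\textbf{Necessity.} Assume $T$ is compact and let $\{f_j\}$ be bounded in $A_\om^p$ with $f_j\to0$ uniformly on compacta. If $\|Tf_j\|_{A_\om^q}\not\to0$, pass to a subsequence with $\|Tf_{j_k}\|_{A_\om^q}\ge\delta>0$, and then, by compactness, to a further subsequence with $Tf_{j_k}\to h$ in $A_\om^q$; thus $\|h\|_{A_\om^q}\ge\delta$ and, by (ii), $Tf_{j_k}\to h$ uniformly on compacta. On the other hand, for fixed $w\in\D$ the map $f\mapsto(Tf)(w)$ is a bounded linear functional on $A_\om^p$ by (i) and the boundedness of $T$; moreover a bounded sequence in $A_\om^p$ that converges to $0$ uniformly on compacta converges weakly to $0$ (split the pairing integral into $\{|z|\le\rho\}$ and $\{\rho<|z|<1\}$, use uniform convergence on the first piece and H\"older's inequality together with the uniform bound on $\|f_j\|_{A_\om^p}$ on the second, invoking the Pel\'aez--R\"atty\"a description of $(A_\om^p)^*$ when $p\ge1$, and (i) directly when $p<1$). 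Therefore $(Tf_{j_k})(w)\to0$ for every $w\in\D$, which forces $h\equiv0$, contradicting $\|h\|_{A_\om^q}\ge\delta$. Hence $\|Tf_j\|_{A_\om^q}\to0$.

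The sufficiency direction is entirely routine; the only delicate point is the weak-null step in the necessity direction, namely verifying that $(Tf_j)(w)\to0$ for each $w\in\D$, which is where one must appeal to (a workable substitute for) the dual space of $A_\om^p$.
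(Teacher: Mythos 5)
Your sufficiency argument is correct and standard: the pointwise growth estimate makes bounded sets of $A_\om^p$ normal families, Fatou puts the limit back in $A_\om^p$, and applying the hypothesis to $f_{j_k}-f$ finishes. (The paper offers no proof of this lemma; it only cites Lemma 3.3 of Contreras--Pel\'aez--Pommerenke--R\"atty\"a, so there is nothing to compare against on that side.)

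The necessity direction, however, breaks exactly at the point you flag, and for $0<p\le 1$ it cannot be repaired by any description of $(A_\om^p)^*$. The claim you need --- a bounded sequence in $A_\om^p$ tending to $0$ uniformly on compacta is weakly null --- holds for $1<p<\infty$ (cleanest route: $A_\om^p$ is a closed subspace of the reflexive space $L_\om^p$, so bounded sequences have weakly convergent subsequences, and bounded point evaluations force the weak limit to be $0$), but it is \emph{false} for $p=1$. Take $\om\equiv 1\in\hD$, $f_j(z)=\tfrac{j+2}{2}\,z^j$, so $\|f_j\|_{A^1}=1$ and $f_j\to 0$ uniformly on compacta, and pair against the lacunary Bloch function $g(z)=\sum_k z^{2^k}$ via the duality $(A^1)^*\cong\mathcal{B}$, $\langle f,g\rangle=\lim_{r\to 1^-}\int_\D f(rz)\overline{g(z)}\,dA(z)$: then $\langle f_{2^k},g\rangle\to\tfrac12$. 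Your ``split the integral at $|z|=\rho$'' device does not apply, because for $g\in\mathcal{B}$ the pairing is not an absolutely convergent integral and, more to the point, the mass of $f_j$ escapes to the boundary so the outer piece does not vanish uniformly in $j$; and ``(i) directly when $p<1$'' bounds $|f_j(z)|$ pointwise but says nothing about the functional $f\mapsto (Tf)(w)$ for an abstract bounded $T$.

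In fact this example shows the necessity direction of the lemma, as literally stated for arbitrary bounded linear $T$, fails at $p=q=1$: the rank-one, hence compact, operator $Tf=\langle f,g\rangle h$ with $0\ne h\in A^1$ satisfies $\|Tf_{2^k}\|_{A^1}\to\tfrac12\|h\|_{A^1}\ne 0$. The versions of this lemma in the literature (Tjani, Montes-Rodr\'{\i}guez, and the cited Lemma 3.3) carry an extra hypothesis that your proof silently needs: $T$ restricted to bounded sets is continuous from the topology of uniform convergence on compacta to itself. With that hypothesis necessity is immediate --- if $Tf_{j_k}\to h$ in $A_\om^q$ then $Tf_{j_k}\to h$ pointwise, while co--co continuity gives $Tf_{j_k}\to 0$ pointwise, so $h=0$, contradicting $\|h\|_{A_\om^q}\ge\delta$ --- and every operator to which the lemma is applied in this paper ($T_g^{n,k}$ and $\T_{\mu,k}^\om$, both given by integral formulas) has this property. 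You should either add that hypothesis or restrict the necessity direction to $1<p<\infty$.
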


Theorem 1.34 in \cite{zhu} plays a very important role when we investigate the Schatten class operator on a Hilbert space.

\begin{lemma}\label{0517-1}
Suppose $T$ is compact on a Hilbert space $H$ and $\{\lambda_j\}$ is the decreasing singular value sequence of $T$.
\begin{enumerate}[(i)]
  \item For each $j\geq 1$, we have
  $$\lambda_{j+1}=\inf\left\{\|T-F\|_{H\to H}:F\in\mathcal{F}_j(H)\right\},$$
  where $\mathcal{F}_j(H)$ is the set of all linear operators on $H$ with rank less than or equal to $j$.
  \item For each $j\geq 0$, we have
  $$\lambda_{j+1}=\min_{f_1,f_2,\cdots,f_j\in H}\max\left\{ \|Tf\|_{H}:\|f\|_{H}=1, \langle f,f_i\rangle_{H}=0,i=1,2,\cdots,j \right\}.$$
\end{enumerate}
\end{lemma}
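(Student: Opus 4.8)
The plan is to reduce everything to the Schmidt (singular value) decomposition of a compact operator on a Hilbert space; all four identities are then just bookkeeping with that series. First I would recall that, since $T$ is compact, $T^*T$ is a compact, positive, self-adjoint operator, so the spectral theorem for such operators supplies an orthonormal system $\{e_j\}\subset H$ with $T^*T e_j=\lambda_j^2 e_j$, where $\{\lambda_j^2\}$ is the decreasing sequence of eigenvalues of $T^*T$ repeated according to multiplicity, $\lambda_1\ge\lambda_2\ge\cdots\ge 0$ and $\lambda_j\to 0$. Setting $\sigma_j=\lambda_j^{-1}Te_j$ whenever $\lambda_j>0$ gives another orthonormal system $\{\sigma_j\}$ and the representation $Tf=\sum_j\lambda_j\langle f,e_j\rangle_H\,\sigma_j$ for every $f\in H$ (terms with $\lambda_j=0$ being simply dropped). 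The numbers $\lambda_j$ are exactly the decreasing singular values of $T$, so it suffices to verify the two displayed formulas against this series.

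For the upper bound in (i), I would take $F_j f=\sum_{i=1}^{j}\lambda_i\langle f,e_i\rangle_H\,\sigma_i$, which lies in $\mathcal{F}_j(H)$. Then $(T-F_j)f=\sum_{i>j}\lambda_i\langle f,e_i\rangle_H\,\sigma_i$, and orthonormality of $\{\sigma_i\}$ together with Bessel's inequality gives $\|(T-F_j)f\|_H^2=\sum_{i>j}\lambda_i^2|\langle f,e_i\rangle_H|^2\le\lambda_{j+1}^2\|f\|_H^2$, hence $\|T-F_j\|_{H\to H}\le\lambda_{j+1}$ and the infimum is $\le\lambda_{j+1}$. For the reverse inequality, fix any $F\in\mathcal{F}_j(H)$ and look at its restriction to the $(j+1)$-dimensional subspace $E=\mathrm{span}\{e_1,\dots,e_{j+1}\}$: since $\dim F(E)\le j$, its kernel inside $E$ has dimension at least $1$, so there is a unit vector $f=\sum_{i=1}^{j+1}c_i e_i\in E$ with $Ff=0$ and $\sum_{i=1}^{j+1}|c_i|^2=1$. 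Then $\|(T-F)f\|_H=\|Tf\|_H=\big(\sum_{i=1}^{j+1}\lambda_i^2|c_i|^2\big)^{1/2}\ge\lambda_{j+1}$, so $\|T-F\|_{H\to H}\ge\lambda_{j+1}$; taking the infimum over $F$ completes (i).

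Part (ii) is the same two-sided argument. Choosing $f_i=e_i$ for $i=1,\dots,j$, any unit vector $f$ with $\langle f,e_i\rangle_H=0$ $(1\le i\le j)$ satisfies $\|Tf\|_H^2=\sum_{i>j}\lambda_i^2|\langle f,e_i\rangle_H|^2\le\lambda_{j+1}^2$, with equality at $f=e_{j+1}$, so the quantity on the right of (ii) is $\le\lambda_{j+1}$ for this choice. Conversely, for arbitrary $f_1,\dots,f_j\in H$ the subspace $\{f\in H:\langle f,f_i\rangle_H=0,\ 1\le i\le j\}$ has codimension at most $j$, hence meets the $(j+1)$-dimensional space $E=\mathrm{span}\{e_1,\dots,e_{j+1}\}$ in a subspace of dimension at least $1$; a unit vector $f=\sum_{i=1}^{j+1}c_i e_i$ there gives $\|Tf\|_H^2=\sum_{i=1}^{j+1}\lambda_i^2|c_i|^2\ge\lambda_{j+1}^2$, so the inner maximum is $\ge\lambda_{j+1}$ for every choice of $f_1,\dots,f_j$, and the minimum equals $\lambda_{j+1}$. (For $j=0$ the claim in (ii) is just $\lambda_1=\|T\|_{H\to H}$, which is the $F=0$ instance of (i).) The only slightly delicate point is the elementary dimension count that guarantees the nontrivial intersections used in both lower bounds; everything else is a direct computation with the Schmidt series. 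Since this is exactly \cite[Theorem 1.34]{zhu}, one may alternatively simply invoke that reference.
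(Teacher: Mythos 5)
Your proof is correct. The paper itself offers no argument for this lemma---it is quoted directly as Theorem 1.34 of Zhu's \emph{Operator Theory in Function Spaces}---so there is no in-paper proof to compare against; your Schmidt-decomposition derivation is the standard proof of that theorem, and both halves of each identity (the truncated series $F_j f=\sum_{i\le j}\lambda_i\langle f,e_i\rangle_H\sigma_i$ for the upper bounds, the rank--nullity/codimension count on $\mathrm{span}\{e_1,\dots,e_{j+1}\}$ for the lower bounds) are carried out correctly. The only point worth making explicit is the degenerate case in which $T^*T$ has fewer than $j+1$ nonzero eigenvalues: then $\lambda_{j+1}=0$, the lower bounds are vacuous, and the upper bounds still follow from the truncated series exactly as you wrote, so nothing breaks.
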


\section{Generalized Integral operators}
\begin{theorem}\label{0514-11}
Suppose $0<p<q<\infty$, $\om\in\hD$, $k,n\in\mathbb{Z}$ satisfying  $0\leq k<n$, and $g\in H(\D)$.
\begin{enumerate}[(i)]
  \item
   $T_g^{n,k}:A_\om^p\to A_\om^q$ is bounded if and only if
   \begin{align}\label{0413-1}
   \sup_{a\in\D} \frac{(1-|a|)^{n-k}|g^{(n-k)}(a)|}{(1-|a|)^{\frac{1}{p}-\frac{1}{q}}\hat{\om}(a)^{\frac{1}{p}-\frac{1}{q}}}<\infty.
   \end{align}
  \item
   $T_g^{n,k}:A_\om^p\to A_\om^q$ is compact if and only if
   \begin{align}\label{0514-3}
   \lim_{|a|\to 1} \frac{(1-|a|)^{n-k}|g^{(n-k)}(a)|}{(1-|a|)^{\frac{1}{p}-\frac{1}{q}}\hat{\om}(a)^{\frac{1}{p}-\frac{1}{q}}}=0.
   \end{align}
\end{enumerate}
\end{theorem}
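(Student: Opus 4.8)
The plan is to work throughout with the area-function description of the $A_\om^q$-norm supplied by Lemma~\ref{0413-2}. Since $T_g^{n,k}f=I^n(f^{(k)}g^{(n-k)})$, the function $h=T_g^{n,k}f$ satisfies $h^{(j)}(0)=0$ for $0\le j\le n-1$ and $h^{(n)}=f^{(k)}g^{(n-k)}$, so Lemma~\ref{0413-2} (taken with this same $n$) gives
\begin{align*}
\|T_g^{n,k}f\|_{A_\om^q}^q\approx\int_\D\left(\int_{\Gamma_z}|f^{(k)}(u)|^2|g^{(n-k)}(u)|^2\left(1-\frac{|u|}{|z|}\right)^{2n-2}dA(u)\right)^{q/2}\om(z)\,dA(z).
\end{align*}
So (i) amounts to deciding when the right-hand side is $\lesssim\|f\|_{A_\om^p}^q$, and, via Lemma~\ref{0425-2}, (ii) amounts to deciding when it is $o(1)$ along norm-bounded sequences tending to $0$ uniformly on compacta.

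For the necessity in (i) I would test on the functions $F_a$ of Lemma~\ref{0425-1} with $\gamma$ large, so that $\|F_a\|_{A_\om^p}^p\approx(1-|a|)\hat{\om}(a)$. On a fixed Bergman disk $\D(a,r)$ one has $|F_a^{(k)}(u)|\approx(1-|a|)^{-k}$, hence $|h^{(n)}(u)|\approx(1-|a|)^{-k}|g^{(n-k)}(u)|$ there. Restricting the outer integral above to those $z$ whose cone $\Gamma_z$ contains $\D(a,r/2)$ — a set of $\om$-measure $\gtrsim\om(S_a)\approx(1-|a|)\hat{\om}(a)$ on which $1-|u|/|z|\approx 1-|a|$ for every $u\in\D(a,r/2)$ — and invoking the sub-mean-value estimate $|g^{(n-k)}(a)|^2(1-|a|)^2\lesssim\int_{\D(a,r/2)}|g^{(n-k)}|^2\,dA$, I obtain
\begin{align*}
\|T_g^{n,k}F_a\|_{A_\om^q}^q\gtrsim(1-|a|)^{(n-k)q}|g^{(n-k)}(a)|^q\,(1-|a|)\hat{\om}(a).
\end{align*}
Comparing with $\|T_g^{n,k}F_a\|_{A_\om^q}^q\lesssim\|F_a\|_{A_\om^p}^q\approx((1-|a|)\hat{\om}(a))^{q/p}$ and simplifying gives \eqref{0413-1}; running the same computation with $F_a/\|F_a\|_{A_\om^p}$, which tends to $0$ uniformly on compacta as $|a|\to 1$, gives the necessity of \eqref{0514-3}.

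For the sufficiency in (i), write $C$ for the supremum in \eqref{0413-1} and substitute $|g^{(n-k)}(u)|\le C(1-|u|)^{\frac{1}{p}-\frac{1}{q}-(n-k)}\hat{\om}(u)^{\frac{1}{p}-\frac{1}{q}}$ into the displayed identity; this reduces matters to an embedding of $A_\om^p$ into a weighted tent space associated with the measure $d\nu(u)=(1-|u|)^{2(\frac{1}{p}-\frac{1}{q})-2(n-k)}\hat{\om}(u)^{2(\frac{1}{p}-\frac{1}{q})}(1-|u|/|z|)^{2n-2}dA(u)$. When $q=2$ the tent space degenerates: by Fubini and the estimate $\int_{T_u}(1-|u|/|z|)^{2n-2}\om(z)\,dA(z)\approx(1-|u|)^{2n-2}\om^*(u)$ (with $T_u=\{z:u\in\Gamma_z\}$), the right-hand side equals $\int_\D|f^{(k)}(u)|^2\,d\mu_g(u)$ with $d\mu_g(u)=|g^{(n-k)}(u)|^2(1-|u|)^{2n-2}\om^*(u)\,dA(u)$, so $T_g^{n,k}:A_\om^p\to A_\om^2$ is bounded iff $D^k:A_\om^p\to L_{\mu_g}^2$ is; Lemma~\ref{0516-1}(i) converts this into a Carleson-box condition, and using $\om^*(u)\approx\om(S_u)\approx(1-|u|)\hat{\om}(u)$ together with a sub-mean-value bound for $g^{(n-k)}$ in one direction and the pointwise bound on $g^{(n-k)}$ in the other shows it is equivalent to \eqref{0413-1}. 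For general $q$ one runs the same argument once the analogous tent-space embedding is in hand — namely that $\int_\D\big(\int_{\Gamma_z}|\cdot|^2\,d\mu_u\big)^{q/2}\om\,dA$ is comparable to an $A_\om^q$-type quantity and is controlled by $\|\cdot\|_{A_\om^p}^q$ exactly when the generating measure obeys the box condition $\mu(S_a)\lesssim\om(S_a)^{q/p}$, which is available in the embedding theory of Pel\'aez and R\"atty\"a (\cite{PjaRj2015,PjaRjSk2015mz}); it then remains to check this box condition for $d\nu$, a routine doubling-weight computation returning \eqref{0413-1}.

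I expect the genuinely delicate point to be the sufficiency when $q\ne 2$, where the passage from an $L^2_\mu$-estimate to an $L^q$-estimate is no longer a bare Fubini argument and forces one to invoke (or re-prove) the tent-space Carleson embedding for $A_\om^p$; the surrounding verifications — the comparisons $\om^*(z)\approx(1-|z|)\hat{\om}(z)$, $\om(S_z)\approx\om(T_z)\approx(1-|z|)\hat{\om}(z)$, the regularity of $\hat{\om}$ on pseudohyperbolic disks, and the identification of the box condition with \eqref{0413-1} — are technical but standard. For (ii), beyond Lemma~\ref{0425-2} and the estimates above, the only extra ingredient is the customary splitting of $\Gamma_z$ into $\{|u|\le\rho\}$ and $\{|u|>\rho\}$: on the second piece \eqref{0514-3} bounds the contribution by $\lesssim\varepsilon^q\sup_j\|f_j\|_{A_\om^p}^q$, while on the first $|g^{(n-k)}|$ is bounded, $\int_{\Gamma_z}(1-|u|/|z|)^{2n-2}dA\lesssim 1$, and the uniform convergence of $f_j$, hence of $f_j^{(k)}$, on compacta sends the term to $0$.
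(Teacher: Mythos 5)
Your overall strategy is sound and, for the necessity direction and the case $q=2$, essentially reproduces what the paper does; but the two arguments diverge in instructive ways. For necessity the paper does not lower-bound the square function over a Carleson-type region as you do: it instead uses the pointwise growth estimate $M_\infty(r,h^{(n)})\lesssim\|h\|_{A_\om^q}(1-r)^{-n-1/q}\hat{\om}(r)^{-1/q}$ applied to $h=T_g^{n,k}F_a$ at the point $a$, which gives \eqref{0413-1} in one line from $(T_g^{n,k}F_a)^{(n)}(a)=F_a^{(k)}(a)g^{(n-k)}(a)$. Your route (restricting the outer integral to $z$ with $\Gamma_z\supset\D(a,r/2)$ plus subharmonicity of $|g^{(n-k)}|^2$) is correct provided you take the region of $z$ to be a full Carleson-box-shaped set reaching the boundary, so that doubling gives its $\om$-measure $\approx(1-|a|)\hat{\om}(a)$ and $1-|u|/|z|\gtrsim 1-|a|$ there; it is a legitimate alternative, just longer.

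The one place where your proposal hides real work is the sufficiency for $q\neq 2$. You reduce to the statement that $D^k:A_\om^p\to T_q^2(\nu,\om)$ is bounded under a Carleson-box condition and cite ``the embedding theory of Pel\'aez and R\"atty\"a.'' What is actually in \cite{PjaRjSk2015mz} is the tent-space embedding for the identity ($k=0$), and what is in \cite{PjaRj2015} (and in Lemma~\ref{0516-1}) is $D^k:A_\om^p\to L_\mu^q$, not the tent-space version with derivatives. The paper therefore proves the needed estimate inline: for $q>2$ by duality against $L_\om^{q/(q-2)}$ using the maximal function $M_\om$ and two applications of Lemma~\ref{0516-1}, and for $0<q<2$ by H\"older against the nontangential maximal function $Nf$ --- and this last argument genuinely only works for $k=0$, because it trades one factor of $|f^{(k)}|$ for $Nf$, which controls $|f|$ but not $|f^{(k)}|$. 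The general $k$ for $0<q<2$ is then recovered by induction on $k$ through the algebraic identities $T_g^{n,1}=T_g^{n-1,0}-T_g^{n,0}-(\text{monomial})$ and $T_g^{n,k}=T_g^{n-1,k-1}-T_g^{n,k-1}-(\text{monomial})$, together with Cauchy's estimate relating $\sup|g^{(N+1)}|(1-|a|)$ to $\sup|g^{(N)}|$ on a larger circle so that \eqref{0413-1} propagates across the induction. Your write-up does not acknowledge that the $k\ge 1$, $q<2$ case is not a direct consequence of a quotable embedding theorem; if you want to keep your structure you must either prove the derivative tent-space embedding for all $0<q<\infty$ or add this induction step. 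The compactness part of your proposal is fine modulo the same caveat: the paper runs the identical three-case machinery with the little-oh versions of the box conditions ($\delta_{i,s}\to 0$) and then the same induction on $k$.
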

\begin{proof}
{\it (i).}
Suppose that $T_g^{n,k}:A_\om^p\to A_\om^q$ is bounded. For any $f\in H(\D)$ and $r\in(0,1)$, let
$$M_q(r,f)=\left( \int_0^{2\pi} |f(re^{\mathtt{i}\theta})|^qd\theta  \right)^\frac{1}{q}, \,\,\,\,
 M_\infty(r,f)=\sup_{\theta\in [0,2\pi)} |f(re^{\mathtt{i}\theta})| .$$
From the well known facts that
\begin{equation*}
M_\infty(r,f)\lesssim \frac{M_{q}(\frac{1+r}{2},f)}{(1-r)^{\frac{1}{q}}},
\,\,\,
M_q(r,f^\p)\lesssim  \frac{M_{q}(\frac{1+r}{2},f)}{1-r},
\end{equation*}
we have
$$M^q_\infty(r,f^{(n)})\lesssim  \frac{\|f\|^q_{A_\om^q}}{(1-r)^{nq+1}\hat{\om}(r)}.$$
Suppose $\gamma$ is large enough. For any $a\in\D$,  let $F_a(z)=\left(\frac{1-|a|^2}{1-\ol{a}z}\right)^\gamma$.
By Lemma \ref{0425-1}, we obtain
\begin{align}\label{0417-2}
\frac{|g^{(n-k)}(a)|}{(1-|a|)^{k}}
\approx  |(T_{g}^{n,k}F_a)^{(n)}(a)|
\lesssim \frac{\|T_g^{n,k} F_a\|_{A_\om^q}}{(1-|a|)^{n+\frac{1}{q}}\hat{\om}(a)^\frac{1}{q}}
\lesssim \frac{\|T_g^{n,k}\|_{A_\om^p\to A_\om^q} }
{(1-|a|)^{n+\frac{1}{q}-\frac{1}{p}}\hat{\om}(a)^{\frac{1}{q}-\frac{1}{p}}},
\end{align}
which deduces (\ref{0413-1}).

Suppose that (\ref{0413-1}) holds. We will divide the proof   into three cases.

{\it Case 1: $q=2$.} By Lemma \ref{0413-2}, Fubini's Theorem and (\ref{0823-1}), we have
\begin{align}
\|T_g^{n,k}f\|_{A_\om^2}^2
&\approx \int_\D \left(\int_{\Gamma_z}|f^{(k)}(u)g^{(n-k)}(u)|^2\left(1-\frac{|u|}{|z|}\right)^{2n-2}dA(u)\right)\om(z)dA(z)  \nonumber\\
&\lesssim \int_\D \left(\int_{T_u}|f^{(k)}(u)g^{(n-k)}(u)|^2\left(1-|u|\right)^{2n-2}\om(z)dA(z)\right)dA(u) \nonumber\\
&\approx\int_\D |f^{(k)}(u)|^2|g^{(n-k)}(u)|^2(1-|u|)^{2n-2}\om(S_u)dA(u). \label{0514-5}
\end{align}
Here and henceforth,  for  any given $\beta\in\RR$,
\begin{align}\label{0424-1}
d\mu_{\beta,k}(u)=|g^{(n-k)}(u)|^2(1-|u|)^\beta\om(S_u)dA(u).
\end{align}
Then, for any fixed $0<r<\infty$ and any $z\in \D$, by (\ref{0823-1}) and $\hat{\om}\in\R$, we have
\begin{align*}
\mu_{2n-2,k}(\D(z,r))
&=\int_{\D(z,r)}  |g^{(n-k)}(u)|^2(1-|u|)^{2n-2}\om(S_u)dA(u)\\
&\leq \sup_{u\in \D(z,r)} |g^{(n-k)}(u)|^2 \int_{\D(z,r)}  (1-|u|)^{2n-2}\om(S_u)dA(u)  \\
&\leq  \sup_{u\in \D(z,r)} \left(|g^{(n-k)}(u)|^2  (1-|u|)^{2n}\om(S_u)\right)\\
&\approx (1-|z|)^{2k}\om(S_z)^\frac{2}{p}\sup_{u\in \D(z,r)}\frac{|g^{(n-k)}(u)|^2  (1-|u|)^{2n}\om(S_u)}{(1-|u|)^{2k}\om(S_u)^\frac{2}{p}}.
\end{align*}
Thus, by  (\ref{0413-1}),
\begin{align}\label{0514-4}
\frac{\mu_{2n-2,k}(D(z,r))}{(1-|z|)^{2k}\om(S_z)^\frac{2}{p}}
\lesssim \sup_{u\in\D} \frac{|g^{(n-k)}(u)|^2(1-|u|)^{2n}\om(S_u)}{(1-|u|)^{2k}\om(S_u)^\frac{2}{p}}<\infty.
\end{align}
Then Lemma \ref{0516-1} deduces that $D^{k}:A_\om^p\to L_{\mu_{2n-2,k}}^2$ is bounded. So,  $\|T_g^{n,k}f\|_{A_\om^2}\lesssim \|f\|_{A_\om^p}$.

{\it Case 2: $2<q<\infty.$}  For any $f\in A_\om^p$ and $\|h\|_{L_\om^\frac{q}{q-2}}\leq 1$, let
\begin{align}\label{0704-2}
(M_\om h )(z)=\sup_{z\in S_a}\frac{\int_{S_a}|h(\eta)|\om(\eta)dA(\eta)}{\om(S_a)},
\end{align}
\begin{align}\label{0704-3}
J_{f,h}=\int_\D |h(z)|\left(\int_{\Gamma_z}|f^{(k)}(u)|^2|g^{(n-k)}(u)|^2\left(1-\frac{|u|}{|z|}\right)^{2n-2}dA(u)\right)\om(z)dA(z),
\end{align}
and
$$x=2n-2k-2+k(p+2-\frac{2p}{q}),\,\,\,\,\,\, y=2n-2k-2.$$
Then,
{\small
\begin{align}\label{0704-1}
x+2-k(p+2-\frac{2p}{q})=2n-2k\,\,  \mbox{ and }\,\,\frac{2qx}{2q+pq-2p}+\frac{(pq-2p)y}{2q+pq-2p}=2n-2.
\end{align}}
Since $T_u\subset S_u$ and $\om(T_u)\approx \om(S_u)$, Fubini's Theorem and H\"older's inequality imply
\begin{align}
J_{f,h}
\leq& \int_\D   |f^{(k)}(u)|^2|g^{(n-k)}(u)|^2\left(1-|u|\right)^{2n-2}\left(\int_{T_u}|h(z)|\om(z)dA(z)\right)dA(u)  \nonumber\\
\leq& \int_\D   |f^{(k)}(u)|^2|g^{(n-k)}(u)|^2\left(1-|u|\right)^{2n-2}(M_\om h)(u)\om(S_u)dA(u)  \nonumber\\
\leq& \left(\int_\D |f^{(k)}(u)|^{2+p-\frac{2p}{q}}|g^{(n-k)}(u)|^2(1-|u|)^x\om(S_u)dA(u)\right)^\frac{2q}{(2+p)q-2p} \nonumber\\
&\cdot \left(\int_\D |(M_\om h)(u)|^{1+\frac{2q}{p(q-2)}}|g^{(n-k)}(u)|^2(1-|u|)^y\om(S_u)dA(u)\right)^\frac{pq-2p}{(2+p)q-2p}.\label{0514-7}
\end{align}
Let $\mu_{x,k}$ and $\mu_{y,k}$ be defined as in (\ref{0424-1}).
Then, for any fixed $0<r<1$ and $z\in \D$, similarly to get (\ref{0514-4}), we have
\begin{align}\label{0514-6}
\frac{\mu_{x,k}(\D(z,r))}{(1-|z|)^{k(2+p-\frac{2p}{q})}\om(S_z)^\frac{pq+2q-2p}{pq}}
\lesssim \sup_{a\in\D} \frac{|g^{(n-k)}(a)|^2(1-|a|)^{x+2}\om(S_a)}{(1-|a|)^{k(2+p-\frac{2p}{q})}\om(S_a)^\frac{pq+2q-2p}{pq}}<\infty
\end{align}
and
\begin{align*}
\frac{\mu_{y,k} (S_a)}{\om(S_a)^{\frac{pq+2q-2p}{pq-2p}\slash\frac{q}{q-2}}}
&\lesssim \frac{\int_{S_a}\om(S_u)^{1+\frac{2}{p}-\frac{2}{q}}(1-|u|)^{-2}dA(u)}{\om(S_a)^{\frac{pq+2q-2p}{pq}}} \lesssim 1 .
\end{align*}
Here, the last inequality  comes from the   proof of \cite[Proposition 4.7]{PjaRj2014book}, i.e.,
$$\int_{S_a}\om(S_u)^{1+2\alpha}(1-|u|)^{-2}dA(u)\lesssim \om(S_a)^{1+2\alpha}, \,\,\mbox{ for any given }\,\,\alpha>0.$$
Thus, Lemma \ref{0516-1} and \cite[Corollary 2.2]{PjaRj2014book} deduce that  both $D^k: A_\om^p \to L_{\mu_{x,k}}^{2+p-\frac{2p}{q}}$ and  $M_\om: L_\om^\frac{q}{q-2}\to L_{\mu_{y,k}}^\frac{pq+2q-2p}{pq-2p} $ are  bounded.
Therefore,
$$J_{f,h}\lesssim \|f\|_{A_\om^p}^2\|h\|_{L_\om^{\frac{q}{q-2}}}.$$
By Lemma \ref{0413-2}, we have
\begin{align}
\|T_g^{n,k} f\|_{A_\om^q}^q
&\approx
\int_\D \left(\int_{\Gamma_z}|f^{(k)}(u)|^2|g^{(n-k)}(u)|^2\left(1-\frac{|u|}{|z|}\right)^{2n-2}dA(u)\right)^\frac{q}{2}\om(z)dA(z) \nonumber\\
&\leq  \sup_{\|h\|_{L_\om^\frac{q}{q-2}}\leq 1}  (J_{f,h})^\frac{q}{2} \label{0514-8}\\
&\lesssim \|f\|_{A_\om^p}^q.\nonumber
\end{align}
So, $T_g^{n,k}:A_\om^p\to A_\om^q$ is bounded.

{\it Case 3: $0<q<2$.} Suppose that (\ref{0413-1}) holds for some $n\in\NN$ and $k=0$.
For any $z\in\D$ and $f\in H(\D)$,
by Lemma \ref{0413-2}, H\"older's inequality and Fubini's theorem,  we have
{\small
\begin{align}
\|T_g^{n,0} f\|_{A_\om^q}^q
&\approx
\int_\D \left(\int_{\Gamma_z}|f(u)|^2|g^{(n)}(u)|^2\left(1-\frac{|u|}{|z|}\right)^{2n-2}dA(u)\right)^\frac{q}{2}\om(z)dA(z)  \nonumber\\
&\leq  \int_\D |(N f)(z)|^\frac{p(2-q)}{2}
\left(\int_{\Gamma_z}|f(u)|^{2+p-\frac{2p}{q}}|g^{(n)}(u)|^2\left(1-\frac{|u|}{|z|}\right)^{2n-2}dA(u)\right)^\frac{q}{2}\om(z)dA(z)\nonumber\\
&\leq \|f\|_{L_\om^p}^\frac{p(2-q)}{2}
\left(\int_\D \left(\int_{\Gamma_z}|f(u)|^{2+p-\frac{2p}{q}}|g^{(n)}(u)|^2\left(1-|u|\right)^{2n-2}dA(u)\right)\om(z)dA(z)\right)^\frac{q}{2}\nonumber\\
&\approx \|f\|_{A_\om^p}^\frac{p(2-q)}{2}
\left(\int_\D |f(u)|^{2+p-\frac{2p}{q}}|g^{(n)}(u)|^2\left(1-|u|\right)^{2n-2}\om(S_u)dA(u)\right)^\frac{q}{2}.\label{0514-10}
\end{align}
}
Let $\mu_{2n-2,0}$ be defined as in (\ref{0424-1}). For any $z\in\D$ and fixed $r\in (0,1)$, as we get  (\ref{0514-4}),  we obtain
\begin{align}\label{0514-9}
\frac{\mu_{2n-2,0}(\D(z,r))}{\om(S_z)^{\frac{2q+pq-2p}{pq}}}
\lesssim \sup_{a\in\D} \frac{|g^{(n)}(a)|^2\left(1-|a|\right)^{2n}\om(S_a)}{\om(S_a)^{\frac{2q+pq-2p}{pq}}}<\infty.
\end{align}
Then, Lemma \ref{0516-1} implies
$$\|T_g^{n,0} f\|_{A_\om^q}^q\lesssim \|f\|_{A_\om^p}^q.$$
That is, $T_g^{n,0}:A_\om^p\to A_\om^q(n=1,2,\cdots)$ is bounded.

By Proposition 5.1 in \cite{zhu}, for any given $N \in\N$,
$$(1-|a|)|g^{(N+1 )}(a)|\lesssim \sup_{|z|=\frac{|a|+1}{2}}|g^{(N )}(z)|.$$
So, when $k=1$ and $n=2,3,\cdots$, from
\begin{align*}
T_g^{n,1}f(z)= T_g^{n-1,0}f(z)-T_g^{n,0}f(z)-\frac{f^{(0)}(0)g^{(n-1)}(0)}{(n-1)!}z^{n-1},
\end{align*}
we see that  $T_g^{n,1}:A_\om^p\to A_\om^q(n=2,3,\cdots)$ are all bounded.

Then, by mathematical induction and
$$T_g^{n,k}f(z)= T_g^{n-1,k-1}f(z)-T_g^{n,k-1}f(z)-\frac{f^{(k-1)}(0)g^{(n-k)}(0)}{(n-1)!}z^{n-1}, 2\leq k<n,$$
we get the desired result.


{\it (ii).}  Suppose that $T_g^{n,k}:A_\om^p\to A_\om^q$ is compact. By Lemma \ref{0425-1},  $\{F_a/ \|F_a\|_{A_\om^p} \}$ is bounded and converges to 0 uniformly on compact subsets of $\D$. From (\ref{0417-2}) and Lemma  \ref{0425-2}, we see that  (\ref{0514-3}) holds.

Conversely, suppose that (\ref{0514-3}) holds.
Let  $\{f_j\}$ be bounded in $A_\om^p$ and converge to 0 uniformly on compact subsets in $\D$.
Bearing in mind that $\mu_{\beta,k}$ is defined by (\ref{0424-1}).

If $q=2$, similarly to get (\ref{0514-4}), we have $\delta_{1,s}\to 0$ as $s\to 1^-$,  where
$$\delta_{1,s}=\sup_{|z|>s}\frac{\mu_{2n-2,k}(\D(z,r))}{(1-|z|)^{2k}\om(S_z)^\frac{2}{p}}.$$
By (\ref{0514-5}) and Lemma \ref{0516-1}, we have
\begin{align*}
\lim_{j\to\infty}\|T_g^{n,k}f_j\|_{A_\om^2}^2
&\lesssim  \lim_{j\to\infty}\left(\int_{s\D}+\int_{\D\backslash s\D}\right) |f_j^{(k)}(u)|^2|g^{(n-k)}(u)|^2(1-|u|)^{2n-2}\om(S_u)dA(u)\\
&\lesssim \delta_{1,s} \sup_{j\in\N} \|f_j\|_{A_\om^p}^2.
\end{align*}
Let $s\to 1$, $\lim\limits_{j\to\infty}\|T_g^{n,k}f_j\|_{A_\om^2}^2=0$. By Lemma \ref{0425-2}, $T_g^{n,k}:A_\om^p\to A_\om^2$ is compact.

If $2<q<\infty$,  similarly to get (\ref{0514-6}), we have $\delta_{2,s}\to 0$ as $s\to 1^-$,  where
$$\delta_{2,s}=\sup_{|z|>s}\frac{\mu_{x,k}(\D(z,r))}{(1-|z|)^{k(2+p-\frac{2p}{q})}\om(S_z)^\frac{pq+2q-2p}{pq}}.$$
Here, $x$ is given in (\ref{0704-1}).
By (\ref{0514-7}) and the boundedness of  $M_\om: L_\om^\frac{q}{q-2}\to L_{\mu_{y,k}}^\frac{pq+2q-2p}{pq-2p} $, we have
\begin{align*}
J_{f_j,h}
\lesssim \|h\|_{L_\om^\frac{q}{q-2}} \left(\int_\D |f_j^{(k)}(u)|^{2+p-\frac{2p}{q}}|g^{(n-k)}(u)|^2(1-|u|)^x\om(S_u)dA(u)\right)^\frac{2q}{(2+p)q-2p}.
\end{align*}
Here, $M_\om$ and $J_{f_j,h}$ are defined in (\ref{0704-2}) and (\ref{0704-3}), respectively.
By (\ref{0514-8}), we have
\begin{align*}
\|T_g^{n,k}f_j\|_{A_\om^q}\lesssim \left(\left(\int_{s\D}+\int_{\D\backslash s\D}\right) |f_j^{(k)}(u)|^{2+p-\frac{2p}{q}}|g^{(n-k)}(u)|^2(1-|u|)^x\om(S_u)dA(u)\right)^\frac{q}{(2+p)q-2p}.
\end{align*}
Therefore, Lemma \ref{0516-1} deduces
\begin{align*}
\lim_{j\to\infty} \|T_g^{n,k}f_j\|_{A_\om^q}
&\lesssim\lim_{j\to\infty} \left(\int_{\D\backslash s\D} |f_j^{(k)}(u)|^{2+p-\frac{2p}{q}}|g^{(n-k)}(u)|^2(1-|u|)^x\om(S_u)dA(u)\right)^\frac{q}{(2+p)q-2p}  \\
&\lesssim \delta_{2,s}^\frac{q}{(2+p)q-2p} \sup_{j\in\N} \|f_j\|_{A_\om^p}.
\end{align*}
Let $s\to 1$. We obtain that $\lim_{j\to\infty}\|T_g^{n,k}f_j\|_{A_\om^q}=0$. By Lemma \ref{0425-2}, $T_g^{n,k}:A_\om^p\to A_\om^q$ is compact.

If $0<q<2$ and $k=0$, similarly to get (\ref{0514-9}), we have $\delta_{3,s}\to 0$ as $s\to 1^-$, where
$$\delta_{3,s}=\sup_{|z|>s}\frac{\mu_{2n-2,0}(\D(z,r))}{\om(S_z)^{\frac{2q+pq-2p}{pq}}}.$$
By (\ref{0514-10})   and Lemma \ref{0516-1}, we have
{\small
\begin{align*}
\lim_{j\to\infty} \|T_g^{n,0} f_j\|_{A_\om^q}
&\lesssim  \sup_{j\in\N} \|f_j\|_{A_\om^p}^\frac{p(2-q)}{2q}
\lim_{j\to\infty}\left(\int_{\D\backslash s\D} |f_j(u)|^{2+p-\frac{2p}{q}}|g^{(n)}(u)|^2\left(1-|u|\right)^{2n-2}\om(S_u)dA(u)\right)^\frac{1}{2}\\
&\lesssim   \delta_{3,s}^{\frac{1}{2}}  \sup_{j\in\N} \|f_j\|_{A_\om^p}.
\end{align*}
}
Letting $s\to 1$, by Lemma \ref{0425-1}, $T_g^{n,0}:A_\om^p\to A_\om^q$ is compact.

Then, by mathematical induction, (\ref{0514-3}) deduces the compactness of $T_g^{n,k}:A_\om^p\to A_\om^q$ for all $n\in\N$ and $k=0,1,2,\cdots,n-1$.
The proof is complete.
\end{proof}

\begin{theorem}
Suppose $\om\in\hD$, $g\in H(\D)$,  $0<p<\infty$ and    $k,n\in\mathbb{Z}$ satisfying $0\leq k<n$.
\begin{enumerate}[(i)]
  \item  If $k\geq 1$, $T_g^{n,k}:A_\om^p\to A_\om^p$ is bounded (compact) if and only if $g\in \B$($g\in \B_0$).
  \item $T_g^{n,0}:A_\om^p\to A_\om^p$ is bounded (compact) if and only $g\in \mathcal{C}^1(\om^*)$($g\in \mathcal{C}_0^1(\om^*)$).
\end{enumerate}
\end{theorem}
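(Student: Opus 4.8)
The plan is to prove part (i) directly and then deduce part (ii) from part (i), from Theorem A (which is the case $n=1$), and from an elementary algebraic identity relating $T_g^{n,0}$ to $T_g^{1,0}$ and the operators $T_g^{n,j}$ with $j\ge1$.

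For part (i), suppose first $g\in\B$. Since $T_g^{n,k}f=I^n(f^{(k)}g^{(n-k)})$ vanishes to order $n$ at the origin, Lemma \ref{0413-2} applied with order $n$ gives
\[
\|T_g^{n,k}f\|_{A_\om^p}^p\approx\int_\D\left(\int_{\Gamma_z}|f^{(k)}(u)|^2|g^{(n-k)}(u)|^2\left(1-\frac{|u|}{|z|}\right)^{2n-2}dA(u)\right)^{p/2}\om(z)\,dA(z).
\]
On $\Gamma_z$ we have $0<1-|u|/|z|\le1-|u|$, and since $n-k\ge1$ the Bloch estimate gives $|g^{(n-k)}(u)|\lesssim\|g\|_\B(1-|u|)^{-(n-k)}$; writing $2n-2=2(n-k)+(2k-2)$ we deduce $|g^{(n-k)}(u)|^2(1-|u|/|z|)^{2n-2}\lesssim\|g\|_\B^2(1-|u|/|z|)^{2k-2}$, and Lemma \ref{0413-2} with order $k$ (legitimate since $k\ge1$) then yields $\|T_g^{n,k}f\|_{A_\om^p}\lesssim\|g\|_\B\|f\|_{A_\om^p}$. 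For the converse I would argue as for (\ref{0417-2}) with $q$ replaced by $p$: using $(T_g^{n,k}f)^{(n)}=f^{(k)}g^{(n-k)}$, testing on the functions $F_a$, and invoking Lemma \ref{0425-1}, one obtains $(1-|a|)^{n-k}|g^{(n-k)}(a)|\lesssim\|T_g^{n,k}\|$ near $\TT$, which forces $g\in\B$ because $n-k\ge1$. The compactness statements are handled in the same spirit: if $g\in\B_0$ then $\sup_{|u|>s}(1-|u|)^{n-k}|g^{(n-k)}(u)|\to0$ as $s\to1^-$, and splitting the inner integral above over $\Gamma_z\cap s\D$ and $\Gamma_z\setminus s\D$ and appealing to Lemma \ref{0425-2} gives compactness, while the converse feeds $F_a/\|F_a\|_{A_\om^p}$ into Lemma \ref{0425-2} and the pointwise estimate to get $(1-|a|)^{n-k}|g^{(n-k)}(a)|\to0$, that is, $g\in\B_0$.

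For part (ii), the case $n=1$ is Theorem A(ii). For $n\ge2$, Leibniz's rule gives $f\,g^{(n)}=(fg')^{(n-1)}-\sum_{j=1}^{n-1}\binom{n-1}{j}f^{(j)}g^{(n-j)}$; applying $I^n$ and observing that $I^n\bigl((fg')^{(n-1)}\bigr)=T_g^{1,0}f+P_f$, where $P_f$ is a polynomial of degree at most $n-1$ that depends linearly and continuously on $f$, one gets the identity
\[
T_g^{n,0}f=T_g^{1,0}f-\sum_{j=1}^{n-1}\binom{n-1}{j}T_g^{n,j}f+P_f,
\]
in which $f\mapsto P_f$ has finite rank and is therefore bounded and compact on $A_\om^p$. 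Now if $g\in\mathcal{C}^1(\om^*)$, then $g\in\B$, so Theorem A(ii) makes $T_g^{1,0}$ bounded and part (i) makes each $T_g^{n,j}$ ($1\le j\le n-1$) bounded; the identity then gives $T_g^{n,0}$ bounded. Conversely, if $T_g^{n,0}$ is bounded, the pointwise estimate from part (i) with $k=0$ gives $(1-|a|)^n|g^{(n)}(a)|\lesssim\|T_g^{n,0}\|$, hence $g\in\B$; then part (i) makes each $T_g^{n,j}$ ($1\le j\le n-1$) bounded, so solving the identity for $T_g^{1,0}f$ shows $T_g^{1,0}$ is bounded, and therefore $g\in\mathcal{C}^1(\om^*)$ by Theorem A(ii). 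The compactness equivalence is proved verbatim, replacing ``bounded'' by ``compact'' everywhere and using that $T_g^{n,0}$ compact implies $g\in\B_0$ (the pointwise estimate together with Lemma \ref{0425-2}).

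The step I expect to be the real obstacle is the ``only if'' direction of part (ii) when $\om\notin\R$: there $\mathcal{C}^1(\om^*)$ is strictly smaller than $\B$, so the test-function computation, which only detects $g\in\B$, does not give the conclusion by itself. The Leibniz identity is exactly what circumvents this — given part (i), the hypothesis that $T_g^{n,0}$ is bounded upgrades the membership $g\in\B$ to boundedness of $T_g^{1,0}$, and then Theorem A supplies $g\in\mathcal{C}^1(\om^*)$. A smaller technical point is the compactness half of part (i), where the contribution of $\Gamma_z\cap s\D$ has to be dominated uniformly in $z$ before one lets $s\to1$.
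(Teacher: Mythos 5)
Your proposal is correct and follows essentially the same route as the paper: the Bloch bound $|g^{(n-k)}(u)|\lesssim \|g\|_{\B}(1-|u|)^{-(n-k)}$ absorbed into an equivalent norm of order $k$ for sufficiency, the test functions $F_a$ for necessity, and the Leibniz identity reducing $T_g^{n,0}$ to $T_g^{1,0}$ plus the operators $T_g^{n,j}$, $j\ge 1$, so that Theorem A supplies the $\mathcal{C}^1(\om^*)$ characterization. The only (cosmetic) difference is that you run the sufficiency estimate through the non-tangential form of Lemma \ref{0413-2} rather than the radial Paley--Littlewood norm of Lemma \ref{0417-1}, which in fact spares you the reduction to $f$ with vanishing Taylor coefficients of order $<k$.
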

\begin{proof}
{\it (i).} It is well known that, for any given $k\in\N$ and $g\in H(\D)$,
\begin{align}\label{0417-3}
|g(0)|+\sup_{z\in\D}(1-|z|^2)|g^\p(z)|\approx \sum_{j=0}^{k-1}|g^{(j)}(0)|+\sup_{z\in\D} (1-|z|^2)^k|g^{(k)}(z)|.
\end{align}
Suppose $g\in\B$. If $f(0)=f^\p(0)=\cdots=f^{(k-1)}(0)=0$, by Lemma \ref{0417-1}, we have
\begin{align*}
\|f\|_{A_\om^p}^p
&\approx \int_0^1 \|f_r\|_{H^p}^p \om(r)dr 
\approx \int_\D \left(\int_0^1 |f^{(k)}(tz)|^2(1-t)^{2k-1}dt\right)^\frac{p}{2}\om(z)dA(z)
\end{align*}
and
\begin{align*}
\|T_g^{n,k}f\|_{A_\om^p}^p
&\approx \int_\D \left(\int_0^1 |f^{(k)}(tz)|^2|g^{(n-k)}(tz)|^2(1-t)^{2n-1}dt\right)^\frac{p}{2}\om(z)dA(z)\\
&\leq \|g\|_{\B}^p \int_\D \left(\int_0^1 |f^{(k)}(tz)|^2(1-t)^{2k-1}dt\right)^\frac{p}{2}\om(z)dA(z)
\approx \|g\|_{\B}^p\|f\|_{A_\om^p}^p.
\end{align*}
That is, $T_g^{n,k}:A_\om^p\to A_\om^p$ is bounded.

The necessity can be obtained by   (\ref{0417-2}) and (\ref{0417-3}) directly.

{\it (ii).} Suppose $g\in\mathcal{C}^1(\om^*)$. By Theorem A, we have that $g\in\B$ and $T_g^{1,0}:A_\om^p\to A_\om^p$ is bounded.
When $n\geq 2$, for any $f\in A_\om^p$, using statement (i) and the fact that
\begin{align}\label{0417-4}
T_g^{1,0} f(z)=T_g^{n,0}f(z)+\sum_{k=1}^{n-1} C_{n-1}^k T_g^{n,k}f(z)+\sum_{k=1}^{n-1}\frac{1}{k!}\left(\sum_{j=0}^{k-1}C_{k-1}^j f^{(j)}(0)g^{(k-j)}(0)\right)z^k,
\end{align}
$T_g^{n,0}:A_\om^p\to A_\om^p$ is bounded.

Conversely, if $n\geq 2$ and $T_g^{n,0}:A_\om^p\to A_\om^p$ is bounded, by (\ref{0417-2}), $g\in\B$.
By statement (i) and (\ref{0417-4}), $T_g^{1,0}:A_\om^p\to A_\om^p$ is bounded. From Theorem A, $g\in\mathcal{C}^1(\om^*)$.

The compactness of $T_g^{n,k}$ can be characterized by modifying the proof above in a standard way
 and  we omit the details.
The proof is complete.
\end{proof}

\begin{theorem}
Suppose $g\in H(\D)$,  $0<q<p<\infty$, $\om\in\hD$ and  $k,n\in\mathbb{Z}$ satisfying $0\leq k<n$.
 \begin{enumerate}[(i)]
   \item If $g\in A_\om^{\frac{pq}{p-q}}$,  then $T_g^{n,k}:A_\om^p\to A_\om^q$ is compact.
   \item If $q\geq 2$ and $T_g^{n,0}:A_\om^p\to A_\om^q$ is bounded, then $g\in A_\om^{\frac{pq}{p-q}}$.
 \end{enumerate}
\end{theorem}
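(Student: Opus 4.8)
The plan is to reduce everything to the square-function description of the $A_\om^q$-norm in Lemma \ref{0413-2}. Since $T_g^{n,k}f=I^n(f^{(k)}g^{(n-k)})$, all derivatives of $T_g^{n,k}f$ at $0$ of order $<n$ vanish and $(T_g^{n,k}f)^{(n)}=f^{(k)}g^{(n-k)}$, so Lemma \ref{0413-2} (in order $n$, exponent $q$) gives
\[
\|T_g^{n,k}f\|_{A_\om^q}^q\approx\int_\D\bigg(\int_{\Gamma_z}|f^{(k)}(u)|^2|g^{(n-k)}(u)|^2\Big(1-\tfrac{|u|}{|z|}\Big)^{2n-2}dA(u)\bigg)^{q/2}\om(z)dA(z).
\]

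For (i) I would first treat $k=0$. Write $s=\tfrac{pq}{p-q}$, so $\tfrac1s=\tfrac1q-\tfrac1p$ and $\tfrac{pq}{2(p-q)}=\tfrac s2$. In the display with $k=0$, use $|f(u)|\le(Nf)(z)$ for $u\in\Gamma_z$ to pull $(Nf)(z)^q$ out of the inner integral, then apply H\"older's inequality in $z$ with exponents $\tfrac pq$ and $\tfrac p{p-q}$. The first factor is $\big(\int_\D(Nf)(z)^p\om(z)dA(z)\big)^{q/p}\approx\|f\|_{A_\om^p}^q$ by the $N$-function form of Lemma \ref{0413-2}, and the second is $\big(\int_\D(\int_{\Gamma_z}|g^{(n)}(u)|^2(1-|u|/|z|)^{2n-2}dA(u))^{s/2}\om(z)dA(z)\big)^{(p-q)/p}\le\|g\|_{A_\om^s}^{s(p-q)/p}=\|g\|_{A_\om^s}^q$, again by Lemma \ref{0413-2} applied to $g$ in order $n$, exponent $s$. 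Hence $\|T_g^{n,0}f\|_{A_\om^q}\lesssim\|f\|_{A_\om^p}\|g\|_{A_\om^s}$, so $T_g^{n,0}$ is bounded. For compactness of $T_g^{n,0}$ I would apply Lemma \ref{0425-2}: if $\{f_j\}$ is bounded in $A_\om^p$ and $f_j\to0$ uniformly on compacta, split $\int_\D=\int_{t\D}+\int_{\D\backslash t\D}$ in the identity above; on $t\D$ one has $(Nf_j)(z)^q\le\big(\sup_{z\in t\D}|f_j(z)|\big)^q\to0$ as $j\to\infty$ (while $\int_\D(\int_{\Gamma_z}|g^{(n)}|^2(1-|u|/|z|)^{2n-2}dA(u))^{q/2}\om\,dA<\infty$ by H\"older and $g\in A_\om^s$), and on $\D\backslash t\D$ the same chain of estimates bounds the contribution by $\sup_j\|f_j\|_{A_\om^p}^q$ times a positive power of the tail $\int_{\D\backslash t\D}(\int_{\Gamma_z}|g^{(n)}|^2(1-|u|/|z|)^{2n-2}dA(u))^{s/2}\om\,dA$, which $\to0$ as $t\to1$; letting first $j\to\infty$ and then $t\to1$ gives compactness. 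For $1\le k<n$ I would use the recursions from the proof of Theorem \ref{0514-11}, $T_g^{n,1}f=T_g^{n-1,0}f-T_g^{n,0}f-c\,f(0)z^{n-1}$ and $T_g^{n,k}f=T_g^{n-1,k-1}f-T_g^{n,k-1}f-c_k\,f^{(k-1)}(0)z^{n-1}$ (with $c,c_k$ depending only on $g$): since $f\mapsto f^{(k-1)}(0)z^{n-1}$ is a bounded rank-one map $A_\om^p\to A_\om^q$, induction on $k$ (with every order $m\le n$ of the case $k=0$ at hand) writes $T_g^{n,k}$ as a finite linear combination of compact operators, hence compact.

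For (ii), assume $q\ge2$ and $T_g^{n,0}:A_\om^p\to A_\om^q$ bounded; if $n=1$ this is Theorem A\,(iii), so let $n\ge2$. Since $(T_g^{n,0}f)^{(n-1)}=I(fg^{(n)})=T_{g^{(n-1)}}f$ and the lower-order derivatives of $T_g^{n,0}f$ at $0$ vanish, Lemma \ref{0413-2} in order $n-1$ gives
\[
\|T_g^{n,0}f\|_{A_\om^q}^q\approx\int_\D\bigg(\int_{\Gamma_z}|T_{g^{(n-1)}}f(u)|^2\Big(1-\tfrac{|u|}{|z|}\Big)^{2n-4}dA(u)\bigg)^{q/2}\om(z)dA(z).
\]
Set $\tilde\om(u):=(1-|u|)^{(n-1)q-1}\hat\om(u)$, a doubling weight (as $n\ge2$ forces $(n-1)q-1>-1$, and $\hat{\tilde\om}(u)\approx(1-|u|)^{(n-1)q}\hat\om(u)$). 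For $q=2$, Fubini turns the right side into $\int_\D|T_{g^{(n-1)}}f(u)|^2\tilde\om(u)dA(u)$, since then $\tilde\om(u)\approx\int_{T_u}(1-|u|/|z|)^{2n-4}\om(z)dA(z)$; for $2<q<p$ the crucial point is that $t\mapsto t^{q/2}$ is superadditive, so decomposing $\Gamma_z$ into pieces of bounded hyperbolic diameter, bounding each below by the value of $|T_{g^{(n-1)}}f|^2$ at its centre times its area, and summing over a separated lattice $\{a_j\}$ of $\D$ (using $a_j\in\Gamma_z\Leftrightarrow z\in T_{a_j}$, (\ref{0823-1}), and $\tilde\om(\D(a_j,r))\approx(1-|a_j|)\hat{\tilde\om}(a_j)$) yields the one-sided estimate $\|T_g^{n,0}f\|_{A_\om^q}^q\gtrsim\sum_j|T_{g^{(n-1)}}f(a_j)|^q\tilde\om(\D(a_j,r))\approx\|T_{g^{(n-1)}}f\|_{A_{\tilde\om}^q}^q$. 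Either way, $T_{g^{(n-1)}}:A_\om^p\to A_{\tilde\om}^q$ is bounded; this is the classical Volterra operator between Bergman spaces with doubling weights in the range $0<q<p$, and by the mixed-weight analogue of Theorem A\,(iii) (obtained from Lemma \ref{0516-1}\,(ii) together with the standard reduction of $\|T_h\,\cdot\,\|_{A_{\tilde\om}^q}$ to a shifted Bergman norm of $(\,\cdot\,)h'$ and a H\"older step as in part (i), cf.\ \cite{PjaRj2014book,KtRj2019mz}) its boundedness forces a weighted tent-space condition on $(g^{(n-1)})'=g^{(n)}$. Converting that condition back — tracking the passage between the weights $\om$ and $\tilde\om$ through $n-1$ applications of the Littlewood–Paley identity of Lemma \ref{0413-2} relating $g^{(n)}$ to $g$ — yields exactly $g\in A_\om^s$, $\tfrac1s=\tfrac1q-\tfrac1p$.

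The main obstacle is the necessity in (ii), especially that last translation. The hypothesis $q\ge2$ is genuinely used: it makes $t\mapsto t^{q/2}$ superadditive, which is what lets one pass from the square function of $T_g^{n,0}f$ to an honest $A_{\tilde\om}^q$-norm of $T_{g^{(n-1)}}f$; for $q<2$ this step reverses and the reduction fails, which is why the statement excludes $q<2$ (and why, for $k\ge1$, part (i) only claims sufficiency). The bookkeeping in matching the Carleson/tent-space condition produced by the mixed-weight Volterra characterization with the membership $g\in A_\om^s$ is the heaviest computation; for the model weights $\om(z)=(1-|z|^2)^\alpha$ it can be carried out explicitly — the emerging condition is $\int_\D|g^{(n)}(z)|^s(1-|z|)^{ns+\alpha}dA(z)<\infty$, which by the classical Littlewood–Paley theorem is $g\in A^s_\alpha$ — and the general doubling case follows the same pattern with $\hat\om$ replacing $(1-|z|)^{\alpha+1}$.
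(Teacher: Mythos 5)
Your part (i) is essentially the paper's proof: the same Littlewood--Paley identity, the same H\"older split producing $\|f\|_{A_\om^p}^q\|g\|_{A_\om^{pq/(p-q)}}^q$, the same truncation argument for compactness, and the same recursion in $k$. No issues there.

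Part (ii) is where the proposal breaks down. First, your passage from the square function of $T_g^{n,0}f$ to $\|T_{g^{(n-1)}}f\|_{A_{\tilde\om}^q}^q$ for $q>2$ is not valid: superadditivity of $t\mapsto t^{q/2}$ plus the sub-mean-value inequality only yields the discrete lower bound $\sum_j|T_{g^{(n-1)}}f(a_j)|^q\,\tilde\om(\D(a_j,r))$, and a sum of point evaluations at a lattice does \emph{not} dominate $\|T_{g^{(n-1)}}f\|_{A_{\tilde\om}^q}^q$ from below (the function can be small at every $a_j$ while its integral is large; the subharmonicity inequality you would need runs in the wrong direction). Second, even where the reduction works ($q=2$), you land on the boundedness of a \emph{two-weight} Volterra operator $T_{g^{(n-1)}}:A_\om^p\to A_{\tilde\om}^q$ and then invoke a ``mixed-weight analogue of Theorem A(iii)'' whose necessity direction is precisely the hard content being asked; it is neither in the paper nor proved by you, and your final translation back to $g\in A_\om^{pq/(p-q)}$ is only carried out for the power weights. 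The paper's route avoids all of this: using the factorization $f=f_1f_2$ from \cite{KtRj2019mz} together with the identity $T_g^{n,0}f=T_{F}^{n,0}f_2$, $F=T_g^{n,0}f_1$, and the norm estimate from part (i), it first \emph{self-improves} the boundedness of $T_g^{n,0}$ to any pair $(\hat p,\hat q)$ with $\tfrac1{\hat q}-\tfrac1{\hat p}=\tfrac1q-\tfrac1p$, and in particular (this is exactly where $q\ge2$ enters, not superadditivity) to the pair $\bigl(\tfrac{2pq}{pq-2(p-q)},2\bigr)$. In the Hilbert target one gets by Fubini the honest integral lower bound
\begin{align*}
\|T_g^{n,0}f\|_{A_\om^2}^2\gtrsim\int_\D|f(z)|^2|g^{(n)}(z)|^2(1-|z|)^{2n-1}\hat\om(z)\,dA(z),
\end{align*}
i.e.\ the boundedness of the embedding $Id:A_\om^{2pq/(pq-2(p-q))}\to L^2_{\mu_{2n-2,0}}$, and Lemma \ref{0516-1}(ii) then gives a closed-form condition which Lemma \ref{0413-2} identifies with $g\in A_\om^{pq/(p-q)}$. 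If you want to salvage your outline, replace the discretization and the appeal to an unproved two-weight Volterra theorem with this factorization-plus-embedding argument.
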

\begin{proof} {\it (i).} Suppose $g\in A_\om^\frac{pq}{p-q}$. 
 For any $f\in A_\om^p$, by Lemma \ref{0413-2} and H\"older's inequality,
{\small
\begin{align}
\|T_g^{n,0}f\|_{A_\om^q}^q
\approx& \int_\D \left(\int_{\Gamma_z}|f(u)|^2|g^{(n)}(u)|^2\left(1-\frac{|u|}{|z|}\right)^{2n-2}dA(u)\right)^\frac{q}{2}\om(z)dA(z) \nonumber\\
\lesssim&  \int_\D |(Nf)(z)|^q\left(\int_{\Gamma_z}|g^{(n)}(u)|^2\left(1-\frac{|u|}{|z|}\right)^{2n-2}dA(u)\right)^\frac{q}{2}\om(z)dA(z)\label{0515-1}\\
\lesssim& \left(\int_\D |(Nf)(z)|^p\om(z)dA(z)\right)^\frac{q}{p}  \nonumber\\
&\cdot\left(\int_\D \left(\int_{\Gamma_z}|g^{(n)}(u)|^2\left(1-\frac{|u|}{|z|}\right)^{2n-2}dA(u)\right)^\frac{pq}{2(p-q)}\om(z)dA(z)\right)^{1-\frac{q}{p}}\label{0515-2}\\
\lesssim& \|f\|_{A_\om^p}^q\|g\|_{A_\om^{\frac{pq}{p-q}}}^q. \nonumber
\end{align}
}
So, $T_g^{n,0}:A_\om^p\to A_\om^q$ is bounded and
\begin{align}\label{0423-1}
\|T_g^{n,0}\|_{A_\om^p\to A_\om^q}\lesssim \|g\|_{A_\om^{\frac{pq}{p-q}}}.
\end{align}
Suppose $\{f_j\}$ is bounded in $A_\om^p$ and converges to 0 uniformly on compact subsets of $\D$.
For any $0\leq r<1$, let
$$J_1(f_j,r)=\left(\int_{r\D} |(Nf_j)(z)|^p\om(z)dA(z)\right)^\frac{1}{p}$$
and
$$J_2(g,r)=\left(\int_{\D\backslash r\D} \left(\int_{\Gamma_z}|g^{(n)}(u)|^2\left(1-\frac{|u|}{|z|}\right)^{2n-2}dA(u)\right)^\frac{pq}{2(p-q)}\om(z)dA(z)\right)^\frac{p-q}{pq}.$$
Obviously, $\lim\limits_{r\to 1} J_2(g,r)=0$.
By (\ref{0515-1}) and (\ref{0515-2}), we have
\begin{align*}
\lim_{j\to\infty}\|T_g^{n,0}f\|_{A_\om^q}^q
&\lesssim \lim_{j\to \infty}\left((J_1(f_j,r))^q\|g\|_{A_\om^{\frac{pq}{p-q}}}^q  +\|f_j\|_{A_\om^p}^q (J_2(g,r))^q \right)\\
&\lesssim \left(\sup_{j\in\N}\|f_j\|_{A_\om^p}^q\right)(J_2(g,r))^q.
\end{align*}
Letting $r\to 1$, by Lemma \ref{0425-1}, $T_g^{n,0}:A_\om^p\to A_\om^q$ is compact.

When $k=1$ and $n=2,3,\cdots$, since
\begin{align*}
T_g^{n,1}f(z)= T_g^{n-1,0}f(z)-T_g^{n,0}f(z)-\frac{f^{(0)}(0)g^{(n-1)}(0)}{(n-1)!}z^{n-1},
\end{align*}
we get the compactness of $T_g^{n,1}:A_\om^p\to A_\om^q(n=2,3,\cdots)$.

Then, mathematical induction and the fact
$$T_g^{n,k}f(z)= T_g^{n-1,k-1}f(z)-T_g^{n,k-1}f(z)-\frac{f^{(k-1)}(0)g^{(n-k)}(0)}{(n-1)!}z^{n-1}, 2\leq k<n,$$
deduce the desired result.


{\it (ii).} In order to prove this result, we extend the proof of \cite[Proposition 4.8]{PjaRj2014book} to $T_g^{n,0}$.
Suppose $0<\hat{p},\hat{q}<\infty$ satisfying $\frac{1}{\hat{q}}-\frac{1}{q}=\frac{1}{\hat{p}}-\frac{1}{p}>0$.
Let $\frac{1}{s}=\frac{1}{\hat{q}}-\frac{1}{q}$.
From \cite{KtRj2019mz}, for any $f\in A_\om^{\hat{p}}$, there exist $f_1\in A_\om^p$ and $f_2\in A_\om^s$ such that
$$f=f_1 f_2 \mbox{ and } \|f_1\|_{A_\om^p}\|f_2\|_{A_\om^s}\lesssim \|f\|_{A_\om^{\hat{p}}}.$$
Letting  $F=T_g^{n,0}f_1$, we have
$$T_g^{n,0}f=T_F^{n,0}f_2,  \,\,\,\mbox{ and }\,\,\,  \|F\|_{A_\om^q}\lesssim \|f_1\|_{A_\om^p}.$$
From (\ref{0423-1}),
\begin{align*}
\|T_{g}^{n,0}f\|_{A_\om^{\hat{q}}}=\|T_F^{n,0}f_2\|_{A_\om^{\hat{q}}}
\lesssim \|F\|_{A_\om^q} \|f_2\|_{A_\om^s}
\lesssim \|f_1\|_{A_\om^p} \|f_2\|_{A_\om^s}
\lesssim \|f\|_{A_\om^{\hat{p}}}.
\end{align*}
That is, $T_g^{n,0}:A_\om^{\hat{p}}\to A_\om^{\hat{q}}$ is bounded.


So, if $q\geq 2$ and $T_g^{n,0}:A_\om^p\to A_\om^q$ is bounded,  $T_g^{n,0}:A_\om^\frac{2pq}{pq-2(p-q)}\to A_\om^2$ is  also bounded.
Let $\chi_{\D\backslash \frac{1}{2}\D}(z)$ be the characteristic function on $\D\backslash\frac{1}{2}\D$.
By a calculation, $S_{\frac{1+|z|}{2|z|}z}\subset T_z$ when $|z|\geq \frac{1}{2}$.
From Lemma \ref{0413-2} and Fubini's theorem,  for any $f\in A_\om^\frac{2pq}{pq-2(p-q)}$, we get
\begin{align*}
\|T_g^{n,0}f\|_{A_\om^2}^2
&\approx \int_\D \int_{\Gamma_\xi} |f(z)|^{2}|g^{(n)}(z)|^{2}(1-\frac{|z|}{|\xi|})^{2n-2}dA(z)\om(\xi)\chi_{\D\backslash\frac{1}{2}\D}(\xi)dA(\xi)  \\
&\gtrsim \int_{\D\backslash\frac{3}{4}\D} |f(z)|^{2}|g^{(n)}(z)|^{2}\int_{T_z} (|\xi|-|z|)^{2n-2}\om(\xi)dA(\xi)dA(z)\\
&\geq \int_{\D\backslash\frac{3}{4}\D} |f(z)|^{2}|g^{(n)}(z)|^{2}\int_{S_{\frac{1+|z|}{2|z|}z}} (|\xi|-|z|)^{2n-2}\om(\xi)dA(\xi)dA(z)\\
&\approx \int_\D |f(z)|^{2}|g^{(n)}(z)|^{2} (1-|z|)^{2n-1}\hat{\om}(z)dA(z).
\end{align*}
Since $(1-|z|)\hat{\om}(z)\approx \om(S_z)$, letting
$\mu_{2n-2,0}$ be defined as in (\ref{0424-1}).
We obtain that $Id: A_\om^\frac{2pq}{pq-2(p-q)}\to L_{\mu_{2n-2,0}}^2$ is bounded. By  Lemma \ref{0516-1}, we have
$$
\|Id\|_{A_\om^\frac{2pq}{pq-2(p-q)}\to L_{\mu_{2n-2,0}}^2}^2
\approx\left(\int_\D \left(\int_{\Gamma_\xi}|g^{(n)}(z)|^2(1-|z|)^{2n-2}dA(z)\right)^\frac{pq}{2(p-q)} \om(\xi)dA(\xi)\right)^\frac{2(p-q)}{pq}<\infty.
$$
Therefore,
$$
\int_\D \left(\int_{\Gamma_\xi}|g^{(n)}(z)|^2(1-\frac{|z|}{|\xi|})^{2n-2}dA(z)\right)^\frac{pq}{2(p-q)} \om(\xi)dA(\xi)<\infty.
$$
From Lemma \ref{0413-2},  $g\in A_\om^\frac{pq}{p-q}$ and $\|g\|_{A_\om^\frac{pq}{p-q}}\lesssim \|T_g^{n,0}\|_{A_\om^p\to A_\om^q}$.
%
%
\end{proof}

\section{Generalized Toeplitz operators}
In order to characterize the Schatten class operator $T_g^{n,k}$, we define a new kind of Toeplitz operators $\T^\om_{\mu,k}$, which will be called generalized Toeplitz operators.
Let $B_z^\om$ be the reproducing kernel of $A_\om^2$, that is, for any $f\in A_\om^2$ and $z\in\D$, we have
$$f(z)=\langle f,B_z^\om \rangle_{A_\om^2}=\int_\D f(w)\ol{B_z^\om(w)}\om(w)dA(w).$$
Let $\om_j=\int_0^1 t^j\om(t)dt$. By using the standard orthonormal basis $\{z^j/\sqrt{2\om_{2j+1}}\}, j\in\N\cup\{0\}$, of $A_\om^2$, we obtain
$$B_z^\om(w)=\sum_{j=0}^\infty \frac{(\ol{z}w)^j}{2\om_{2j+1}}.$$

If $k=0,1,2,\cdots$, let $D^k$ be the $k$-th differential operator and $B_z^{\om,k}=D^k B_z^\om$.
The generalized Toeplitz operator $\T_{\mu,k}^\om$ is defined by
$$\T_{\mu,k}^\om f(z)=\int_\D f^{(k)}(w)\ol{ B_z^{\om,k}(w)}d\mu(w).$$
Obviously, if $k=0$, $\T_{\mu,k}^{\om}$ is the Toeplitz operator $\T_\mu^{\om}$, which was   studied in \cite{PjaRj2016,PjaRjSk2018jga}.

\begin{lemma}\label{0510-3}
Suppose $\om\in\hD$ and $k\in\N$. There exists a regular weight $\upsilon$ such that  $\upsilon(t)\approx (1-t)^{k-1}\hat{\om}(t)$ and
\begin{align}\label{0510-1}
\sum_{j=k}^\infty \frac{j!}{(j-k)!}\frac{(w\ol{z})^{j-k}}{2\om_{2j+1}}=B_z^\upsilon(w), \mbox{ for all } z,w\in\D.
\end{align}
\end{lemma}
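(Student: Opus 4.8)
The plan is to match the two power series term by term. On the left side of \eqref{0510-1}, after the shift $i = j-k$, the coefficient of $(w\ol z)^i$ is $\frac{(i+k)!}{i!}\cdot\frac{1}{2\om_{2(i+k)+1}}$; on the right side, by the explicit formula for $B_z^\upsilon$ established earlier, the coefficient of $(w\ol z)^i$ is $\frac{1}{2\upsilon_{2i+1}}$, where $\upsilon_{2i+1}=\int_0^1 t^{2i+1}\upsilon(t)\,dt$. Thus \eqref{0510-1} is equivalent to the moment identity
\begin{equation*}
\upsilon_{2i+1}=\frac{i!}{(i+k)!}\,\om_{2(i+k)+1},\qquad i\in\N\cup\{0\}.
\end{equation*}
So the real content is to produce a radial weight $\upsilon$ whose odd moments are prescribed by the right-hand side, and then to show it can be taken regular with $\upsilon(t)\approx (1-t)^{k-1}\hat\om(t)$.

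First I would guess the candidate $\upsilon(t)=c_k\,(1-t)^{k-1}\hat\om(t)$ for a suitable dimensional constant $c_k$ (for instance $c_k=\frac{1}{(k-1)!}$ after a normalization), or more robustly define $\upsilon$ directly by the desired moments and then identify it with $(1-t)^{k-1}\hat\om(t)$ up to constants. The cleanest route is to compute $\int_0^1 t^{2i+1}(1-t)^{k-1}\hat\om(t)\,dt$: integrating by parts $k$ times, moving derivatives off $\hat\om$ (recall $\hat\om{}'=-\om$) and onto the polynomial factor, one reduces $\int_0^1 t^{2i+1}(1-t)^{k-1}\hat\om(t)\,dt$ to a constant multiple of $\int_0^1 P(t)\om(t)\,dt$ where $P$ is a polynomial; a careful bookkeeping of the Beta-type integrals shows this equals $\frac{(k-1)!\,i!}{(i+k)!}\om_{2(i+k)+1}$ up to lower-order moment terms, and in fact the leading behavior already gives the stated asymptotic equivalence $\upsilon(t)\approx(1-t)^{k-1}\hat\om(t)$. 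Alternatively, and more in the spirit of \cite{PjaRj2014book}, one can avoid exact identities: define $\upsilon$ to be the radial weight with $\upsilon_{2i+1}=\frac{i!}{(i+k)!}\om_{2(i+k)+1}$ (such a weight exists and is essentially unique among radial weights since the odd moments determine the series $B_z^\upsilon$, hence $\upsilon$ via the reproducing property), and then prove the two-sided estimate $\upsilon(t)\approx(1-t)^{k-1}\hat\om(t)$ by comparing moments: both sides have comparable $(2i+1)$-st moments for all $i$, and for doubling weights comparability of all moments upgrades to pointwise comparability of the weights (using $\om\in\hD$ and the standard fact that $\hat\om\in\R$).

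Once $\upsilon(t)\approx(1-t)^{k-1}\hat\om(t)$ is known, regularity of $\upsilon$ follows from $\hat\om\in\R$ (which holds because $\om\in\hD$, as recalled in the introduction): the function $(1-t)^{k-1}\hat\om(t)$ satisfies $\widehat{(1-t)^{k-1}\hat\om(t)}\approx(1-t)^k\hat\om(t)$ by the same integration-by-parts estimate, so the ratio $\frac{\hat\upsilon(t)}{(1-t)\upsilon(t)}$ is bounded above and below near $t=1$, which is exactly the definition of $\upsilon\in\R$. I expect the main obstacle to be the bookkeeping in the integration-by-parts step: keeping track of the boundary terms (all of which vanish at $t=1$ thanks to the factor $(1-t)^{k-1}$ and at $t=0$ after multiplication by $t^{2i+1}$) and verifying that the lower-order moment contributions do not spoil the asymptotic $\upsilon(t)\approx(1-t)^{k-1}\hat\om(t)$; this is where one must use the doubling hypothesis to control $\om_{2(i+k)+1}$ against $\om_{2(i+k)-1}$, etc. The term-by-term matching of the two series and the passage from moment comparability to pointwise comparability are then routine given the machinery of \cite{PjaRj2014book,Pja2015}.
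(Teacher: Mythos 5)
Your reduction of \eqref{0510-1} to the exact moment identity $\upsilon_{2i+1}=\frac{i!}{(i+k)!}\,\om_{2(i+k)+1}$ is correct, but neither of your two routes actually delivers a weight satisfying it. The first candidate $\upsilon(t)=c_k(1-t)^{k-1}\hat{\om}(t)$ does not have the right moments even for $k=1$: integration by parts gives
\begin{equation*}
\int_0^1 t^{2i+1}\hat{\om}(t)\,dt=\frac{1}{2i+2}\,\om_{2i+2},
\end{equation*}
whereas you need $\frac{1}{i+1}\,\om_{2i+3}$. These are genuinely different sequences, and the ``lower-order moment terms'' you propose to absorb cannot be discarded, because \eqref{0510-1} is an exact identity of power series, not an asymptotic one. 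The correct explicit weight for $k=1$ is $\upsilon(t)=2\int_t^1 s\,\om(s)\,ds$ --- note the extra factor $s$ inside the integral --- for which a single integration by parts gives $\upsilon_{2j-1}=\frac{(j-1)!}{j!}\,\om_{2j+1}$ exactly; this $\upsilon$ is only \emph{comparable} to $\hat{\om}$, which is all the lemma claims. Your second route, defining $\upsilon$ by prescribing its odd moments, is a Stieltjes/Hausdorff moment problem: you give no argument that a nonnegative integrable radial density with those moments exists (positivity of a density with prescribed moments is not automatic). Moreover, the asserted upgrade from ``comparable moments for all $i$'' to ``pointwise comparable weights'' is false in general --- one can redistribute the mass of a weight locally without changing any moment by more than a bounded factor, destroying pointwise comparability --- and extracting $\upsilon(t)\approx(1-t)^{k-1}\hat{\om}(t)$ from moment data would essentially presuppose the regularity of $\upsilon$ that you are still trying to prove.

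The paper sidesteps both difficulties by never posing a moment problem: for $k=1$ it exhibits $\upsilon(t)=2\int_t^1 s\,\om(s)\,ds$ explicitly and verifies the exact moment identity by one integration by parts, then obtains $\upsilon\approx\hat{\om}$ and $\upsilon\in\R$ from Lemmas 1.6 and 1.7 of \cite{PjaRj2014book}; for general $k$ it inducts, differentiating the kernel series termwise so that the step from $k=K$ to $k=K+1$ is exactly the $k=1$ construction applied to the previous regular weight $\eta$, yielding $\tau\approx\hat{\eta}\approx(1-t)^K\hat{\om}(t)$. If you want to salvage your approach, replace the guessed candidate by this explicit antiderivative-type weight and run the same induction; as written, the proposal has a genuine gap at the existence and identification of $\upsilon$.
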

\begin{proof}
 Let $k=1$. For $j=1,2,\cdots$, we have
\begin{align*}
\frac{(j-1)!\om_{2j+1}}{j!}
&=\frac{1}{j}\int_0^1 t^{2j}d\left(-\int_t^1 s\om(s)ds\right)
=\int_0^1 t^{2j-1}\left(2\int_t^1 s\om(s)ds\right)dt.
\end{align*}
So, (\ref{0510-1}) holds by letting $\upsilon(t)=2\int_t^1 s\om(s)ds$.

From Lemmas 1.6 and 1.7 in \cite{PjaRj2014book}, we have
$$\upsilon(t)\approx \hat{\om}(t)\,\, \mbox{ and }\,\, \upsilon\in\R.$$
So, the lemma holds when $k=1$.

Suppose the lemma holds for $k=K$, that is,  there is a regular weight $\eta$ such that $\eta(t)\approx(1-t)^{K-1}\hat{\om}(t)$ and $$\sum_{j=K}^\infty \frac{j!}{(j-K)!}\frac{(w\ol{z})^{j-K}}{2\om_{2j+1}}=B_z^\eta(w), \mbox{ for all } z,w\in\D.$$
Then,
{\small
\begin{align*}
\sum_{j=K+1}^\infty \frac{j!}{(j-K-1)!}\frac{(w\ol{z})^{j-K-1}}{2\om_{2j+1}}
=\left.\frac{d}{dt} \sum_{j=K}^\infty \frac{j!}{(j-K)!}\frac{t^{j-K}}{2\om_{2j+1}}\right|_{t=w\ol{z}}
=\left.\frac{d}{dt}\sum_{j=0}^\infty \frac{t^j}{2\eta_{2j+1}}\right|_{t=w\ol{z}}.
\end{align*}
}
By the proof of the case $k=1$, there is a regular weight $\tau$ such that
$$\tau(t)\approx \hat{\eta}(t)\approx (1-t)^K\hat{\om}(t)$$
and
$$\left.\frac{d}{dt}\sum_{j=0}^\infty \frac{t^j}{2\eta_{2j+1}}\right|_{t=w\ol{z}}
=\sum_{j=1}^\infty \frac{j!}{(j-1)!}\frac{(w\ol{z})^{j-1}}{2\eta_{2j+1}}
=B_z^\tau(w), \mbox{ for all } z,w\in\D.$$
So, the lemma holds when $k=K+1$.
Thus, the mathematical induction implies the desired result. The proof is complete
%
%
%
%
\end{proof}

A sequence $\{a_j\}_{j=1}^\infty$ is a $r$-lattice for some $0<r<\infty$ means
$\beta(a_i,a_j)\geq \frac{r}{5}$ for all $i\neq j$ and $\D=\cup_{j=1}^\infty \D(a_j,5r)$. For convenience, let $M_k:H(\D)\to H(\D)$ be the multiplier $M_kf(z)=z^kf(z)$.\msk

Next, we will characterize the boundedness, compactness and Schatten class of generalized Toeplitz operators $\T_{\mu,k}^\om$ on $A_\om^2$.

\begin{theorem}\label{0514-1}
Suppose $k\in\N$, $0<p<\infty$ and $\om\in\hD$. Let $0<r<\infty$ and $\{a_j\}_{j=1}^\infty$ be a $r$-lattice of $\D$.
\begin{enumerate}[(i)]
  \item $\T^\om_{\mu,k}$ is bounded on $A_\om^2$ if and only if  $\sup\limits_{z\in\D}\frac{\mu(\D(z,r))}{(1-|z|)^{2k+1}\hat{\om}(z)}<\infty$;
  \item  $\T^\om_{\mu,k}$ is compact on $A_\om^2$ if and only if   $\lim\limits_{|z|\to 1}\frac{\mu(\D(z,r))}{(1-|z|)^{2k+1}\hat{\om}(z)}=0$;
  \item  $\T^\om_{\mu,k}\in \mcS_p(A_\om^2)$ if and only if
  $\sum\limits_{j=1}^\infty \left(\frac{\mu(\D(a_j,5r))}{(1-|a_j|)^{2k+1}\hat{\om}(a_j)}\right)^p<\infty$.
\end{enumerate}
\end{theorem}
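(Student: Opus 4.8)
The plan is to reduce the three statements to the well-understood theory of ordinary Toeplitz operators $\T^\upsilon_{\nu}$ on $A^2_\upsilon$ via the "lifting" identity of Lemma~\ref{0510-3}. First I would compute $B_z^{\om,k}(w)=D^k B_z^\om(w)=\sum_{j\ge k}\frac{j!}{(j-k)!}\frac{(\ol z w)^{j-k}}{2\om_{2j+1}}\,\ol z{}^{\,0}$, more precisely $\ol{B_z^{\om,k}(w)}=\sum_{j\ge k}\frac{j!}{(j-k)!}\frac{(w\ol z)^{j-k}}{2\om_{2j+1}}\cdot(\text{conjugation bookkeeping})$, and invoke Lemma~\ref{0510-3} to identify this kernel with $B_z^{\upsilon}(w)$ up to the change of variable coming from differentiation, where $\upsilon\in\R$ satisfies $\upsilon(t)\approx (1-t)^{k-1}\hat\om(t)$, hence $\widehat{\upsilon}(t)\approx (1-t)^{k}\hat\om(t)$ and $\upsilon(S_z)\approx (1-|z|)^{2k+1}\hat\om(z)/(1-|z|)\cdot(1-|z|)$, i.e. $\upsilon(S_z)\approx (1-|z|)^{k+1}\hat\om(z)$; I will need to keep careful track of these exponents. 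Composing with the differentiation operator $D^k$ (and noting $D^k:A^2_\om\to A^2_\upsilon$ is bounded with bounded "partial inverse" given by $k$-fold integration, by Lemma~\ref{0413-2}-type norm equivalences), the operator $\T^\om_{\mu,k}$ becomes unitarily equivalent, modulo a finite-rank perturbation accounting for the low-order Taylor coefficients $f^{(i)}(0)$, $i<k$, to an ordinary Toeplitz operator $\T^{\upsilon}_{\tilde\mu}$ on $A^2_\upsilon$, where $d\tilde\mu$ is $d\mu$ pushed through the kernel identification. The upshot is that the Carleson-type quantity controlling $\T^\om_{\mu,k}$ is $\frac{\mu(\D(z,r))}{(1-|z|)^{2k}\upsilon(S_z)}\approx\frac{\mu(\D(z,r))}{(1-|z|)^{2k}(1-|z|)^{k+1}\hat\om(z)}$; I would verify the arithmetic so that this matches $\frac{\mu(\D(z,r))}{(1-|z|)^{2k+1}\hat\om(z)}$ as stated.

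For part~(i), once the reduction is in place, boundedness of $\T^{\upsilon}_{\tilde\mu}$ on $A^2_\upsilon$ is equivalent to $\tilde\mu$ being a Carleson measure for $A^2_\upsilon$ in the appropriate normalization, which by Lemma~\ref{0516-1}(i) (with $p=q=2$, $k=0$) is exactly $\sup_z \tilde\mu(\D(z,r))/\upsilon(S_z)<\infty$; translating back gives the claimed condition. The direct route, avoiding black boxes, is: for sufficiency, write $\langle \T^\om_{\mu,k}f,h\rangle_{A^2_\om}=\int_\D f^{(k)}(w)\ol{h^{(k)}(w)}\,d\mu(w)$ — this reproducing identity is the crux and I would prove it by expanding both kernels and using orthogonality — then apply Cauchy--Schwarz and the Carleson embedding $D^k:A^2_\om\to L^2_\mu$ from Lemma~\ref{0516-1}(i); for necessity, test against the normalized kernels $B_a^\om/\|B_a^\om\|$ (whose $k$-th derivatives concentrate on $\D(a,r)$ with the right size, by Lemma~\ref{0425-1}-type estimates) to extract the pointwise lower bound. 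Part~(ii) is the same argument with "$\sup$" replaced by "$\lim_{|z|\to1}$", using Lemma~\ref{0425-2} for the compactness criterion and splitting integrals over $r\D$ and $\D\setminus r\D$ exactly as in the proof of Theorem~\ref{0514-11}(ii).

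For part~(iii), I would again transfer to $\T^{\upsilon}_{\tilde\mu}\in\mcS_p(A^2_\upsilon)$ and then cite or reprove the known Schatten-class characterization for ordinary Toeplitz operators on Bergman spaces with doubling weights (Pel\'aez--R\"atty\"a, \cite{PjaRj2016,PjaRjSk2018jga}), which says $\T^{\upsilon}_{\tilde\mu}\in\mcS_p$ iff $\sum_j (\tilde\mu(\D(a_j,5r))/\upsilon(S_{a_j}))^p<\infty$; translating the normalization yields the stated series. If a self-contained argument is wanted, the standard scheme is: the upper bound uses the atomic/Luecking decomposition — writing $\T^\om_{\mu,k}$ as $A^*A$ where $A=D^k$ composed with the embedding into $L^2_\mu$, then for $p\ge1$ applying the triangle inequality in $\mcS_p$ to the pieces $\mu\lfloor\D(a_j,5r)$ and for $p<1$ using the $p$-triangle inequality for $\mcS_p$; the lower bound uses Lemma~\ref{0517-1}(ii), choosing test functions supported near a lattice and orthogonalizing to get $\sum_j\lambda_j^p\gtrsim\sum_j(\cdots)^p$. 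The main obstacle I anticipate is precisely the bookkeeping in the kernel identity — making the identification $\ol{B_z^{\om,k}(w)}$ with a genuine reproducing kernel $B_z^{\upsilon}(w)$ clean enough that the $\mcS_p$ transference is an honest unitary equivalence modulo finite rank (finite rank does not affect $\mcS_p$ membership), rather than merely a norm comparison; getting the weight $\upsilon$ and all the exponents of $(1-|z|)$ to line up with the factor $(1-|z|)^{2k+1}$ in the statement is the delicate point, everything else being routine adaptation of the cited results.
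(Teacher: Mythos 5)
Your overall strategy is the one the paper uses: rewrite $\ol{B_z^{\om,k}(w)}$ via Lemma \ref{0510-3}, conjugate by $D^k$ and $I^k$ to land on an ordinary Toeplitz operator over a \emph{regular} weight, and then quote the Pel\'aez--R\"atty\"a results. But the step you yourself flag as ``the delicate point'' is in fact broken as written, and it is precisely where the content of the proof lives. One application of Lemma \ref{0510-3} gives $\upsilon(t)\approx(1-t)^{k-1}\hat\om(t)$, so $\upsilon(S_z)\approx(1-|z|)^{k+1}\hat\om(z)$, and your proposed Carleson quantity $\mu(\D(z,r))/\bigl((1-|z|)^{2k}\upsilon(S_z)\bigr)\approx\mu(\D(z,r))/\bigl((1-|z|)^{3k+1}\hat\om(z)\bigr)$ does \emph{not} match the stated $(1-|z|)^{2k+1}\hat\om(z)$ for any $k\geq1$. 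The paper resolves this by applying Lemma \ref{0510-3} a second time: after expressing $\T^\om_{\mu,k}f(z)=z^k\int f^{(k)}(w)\ol{B_z^\upsilon(w)}d\mu(w)$, it computes $\|\T^\om_{\mu,k}f\|_{A_\om^2}$ through the Littlewood--Paley identity of Lemma \ref{0413-2}, which produces $D^kB_w^\upsilon(z)=\ol w^kB_w^\eta(z)$ with a second regular weight $\eta(t)\approx(1-t)^{2k-1}\hat\om(t)$; then $\eta(\D(z,r))\approx(1-|z|)^2\eta(z)\approx(1-|z|)^{2k+1}\hat\om(z)$ and everything lines up, the operator to analyse being $T_kf=\int f(w)\ol w^kB_w^\eta(z)d\mu(w)=M_k^*\T_\mu^\eta f$ on $A_\eta^2$. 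Without this second lift your reduction does not produce the right normalization, so the proposal has a genuine gap rather than a mere bookkeeping chore.

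Two further points. First, in your ``direct route'' you invoke Lemma \ref{0516-1}(i) with $p=q=2$, but that lemma is stated only for $p<q$; the diagonal case $p=q$ with $k\geq1$ derivatives is a different (true) theorem of Pel\'aez--R\"atty\"a \cite{PjaRj2015} and must be cited or reproved — note that for $k=0$ and $p=q$ the local condition $\mu(\D(z,r))\lesssim\om(S_z)$ is \emph{not} sufficient for a general $\om\in\hD$, so the claim genuinely uses $k\geq1$. (Your quadratic-form identity $\langle\T^\om_{\mu,k}f,h\rangle_{A_\om^2}=\int_\D f^{(k)}\ol{h^{(k)}}\,d\mu$ is correct and would, with the right embedding theorem, give (i) and (ii) more directly than the paper does; testing should be done with the functions $F_a$ of Lemma \ref{0425-1} rather than with $B_a^\om/\|B_a^\om\|$, since two-sided pointwise estimates for $B_a^\om$ with $\om$ merely doubling are not available off the shelf.) Second, for (iii) your ``unitarily equivalent modulo finite rank'' is the right instinct but hides real work: the reduction yields $\T_\mu^\eta M_k$, and $M_k$ is injective but not surjective on $A_\eta^2$, so passing from $\T_\mu^\eta M_k\in\mcS_p$ to $\T_\mu^\eta\in\mcS_p$ requires the explicit decomposition $\T_\mu^\eta=J_{k,2}+\T_\mu^\eta M_kJ_{k,1}$ (and, in the other direction, the splitting $T=T_++T_-$ with $T_-$ of finite rank and $T_+$ conjugate to $T_k$ on the codimension-$k$ subspace $A_\om^2(k)$). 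These are the steps your outline would need to supply.
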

\begin{proof}
Using Lemma \ref{0510-3}, there is a  regular weight $\upsilon$  such that
\begin{align*}
\T^\om_{\mu,k}f(z)=z^k\int_\D f^{(k)}(w)\ol{B_z^\upsilon(w)}d\mu(w).
\end{align*}
Then, Lemma \ref{0413-2} implies
\begin{align*}
\|\T^\om_{\mu,k}f\|_{A_\om^2}^2
\approx&\int_\D \left|\int_\D f^{(k)}(w)\ol{B_z^\upsilon(w)}d\mu(w)\right|^2\om(z)dA(z) \\
\approx &\sum_{j=0}^{k-1}\left|\int_\D f^{(k)}(w)D^j{B_w^\upsilon(0)}d\mu(w)\right|^2  \\
&+\int_\D \left|\int_\D f^{(k)}(w) D^k {B_w^\upsilon(z)}d\mu(w)\right|^2(1-|z|)^{2k-1}\hat{\om}(z)dA(z).
\end{align*}
By Lemma \ref{0510-3}, there exists a $\eta\in\R$ such that
\begin{align}\label{0706-1}
D^k {B_w^\upsilon(z)}=\ol{w}^kB_w^\eta(z),\,\,\,\,\,
\eta(t)\approx (1-t)^{k-1}\hat{\upsilon}(t)\approx (1-t)^{2k-1}\hat{\om}(t).
 \end{align}
Then, we have
$$\|f\|_{A_\om^2}^2\approx \sum_{j=0}^{k-1}|f^{(j)}(0)|^2+\|f^{(k)}\|_{A_\eta^2}^2$$
and
\begin{align}
\|\T^\om_{\mu,k}f\|_{A_\om^2}^2
\approx &\sum_{j=0}^{k-1} \left|\int_\D f^{(k)}(w)\ol{w}^j d\mu(w)\right|^2  \nonumber\\
&+\int_\D \left|\int_\D f^{(k)}(w)  {B_w^\eta(z)}\ol{w}^k d\mu(w)\right|^2\eta(z)dA(z). \nonumber
\end{align}
So, $\T^\om_{\mu,k}$ is bounded(or compact) on $A_\om^2$ if and only if all the operators
$$T_j:A_\eta^2\to \CC, \,\,T_j f=\int_\D f(w)\ol{w}^jd\mu(w), \mbox{ for } j=0,1,\cdots,k-1,$$
and
$$T_k:A_\eta^2\to A_\eta^2,\,\,T_k f(z)=\int_\D f(w)\ol{w}^k B_w^\eta(z)d\mu(w)$$
are bounded(or compact).

{\it (i).}  Suppose $\T_{\mu,k}^\om:A_\om^2\to A_\om^2$ is bounded. From the boundedness of $T_0$, $\mu(\D)<\infty$.
Since $T_k$ is bounded on $A_\eta^2$, by Theorem C in \cite{PjaRjSk2018jga}, we have
\begin{align*}
\int_\D |w|^{2k} |B_z^\eta(w)|^2 d\mu(w)
&=(T_k M_k B_z^\eta)(z)
=\langle T_k M_k B_z^\eta, B_z^\eta \rangle_{A_\eta^2}\\
&\leq \|T_k\|_{A_\eta^2\to A_\eta^2} \|M_k\|_{A_\eta^2\to A_\eta^2}\|B_z^\eta\|_{A_\eta^2}^2
\approx \frac{\|T_k\|_{A_\eta^2\to A_\eta^2}}{(1-|z|)^2\eta(z)}.
\end{align*}
Without loss of generality, we can assume $r$ is that in \cite[Lemma 8]{PjaRjSk2018jga}. So, for any $|z|>\frac{1}{2}$,  $|B_z^\eta(w)|\approx B_z^\eta(z)$ when $w\in \D(z,r)$. Thus,
\begin{align}\label{0705-1}
\int_\D |w|^{2k} |B_z^\eta(w)|^2 d\mu(w)
\geq \int_{\D(z,r)} |w|^{2k}|B_z^\eta(w)|^2d\mu(w)
\approx \frac{\mu(\D(z,r))}{(1-|z|)^4(\eta(z))^2}.
\end{align}
Therefore,
$$\sup_{z\in\D} \frac{\mu(\D(z,r))}{(1-|z|)^{2k+1}\hat{\om}(z)}
\approx \sup_{z\in\D} \frac{\mu(\D(z,r))}{(1-|z|)^{2}\eta(z)}<\infty.$$

Conversely, suppose
\begin{align*}
\sup_{z\in\D} \frac{\mu(\D(z,r))}{(1-|z|)^{2k+1}\hat{\om}(z)}<\infty.
\end{align*}
By Theorem 1 in \cite{PjaRjSk2018jga}, $\T_\mu^\eta$ is bounded on $A_\eta^2$.
Since $M_k$ is bounded on $A_\eta^2$, $M_k^*(\T_\mu^\eta)^*$ is bounded  on $A_\eta^2$.
For any $f\in A_\eta^2$, we have  $(\T_\mu^\eta)^*=\T_\mu^\eta$,
\begin{align*}
M_k^* f(z)
&=\langle M_k^* f, B_z^\eta\rangle_{A_\eta^2}
=\ol{\langle M_k B_z^\eta,f\rangle_{A_\eta^2}}
=\int_\D \ol{w}^k f(w) B_w^\eta(z)\eta(w)dA(w)
\end{align*}
and then
\begin{align}
M_k^*(\T_\mu^\eta)^*f(z)
&=\int_\D \ol{w}^k \left(\int_\D f(\xi) B_\xi^\eta(w)d\mu(\xi)\right) B_w^\eta(z)\eta(w)dA(w)  \nonumber\\
&=\int_\D f(\xi)\ol{\int_\D w^k B_w^\eta(\xi) B_z^\eta(w)    \eta(w)dA(w)}  d\mu(\xi)  \nonumber\\
&=\int_\D f(\xi) \ol{\xi^k  B_z^\eta(\xi)}d\mu(\xi) \nonumber\\
&=T_k f(z).  \label{0513-2}
\end{align}
Therefore, $T_k$  is bounded  on $A_\eta^2$.

When $j=0,1,\cdots,k-1$, since $\eta\in\R$,   for any $f\in A_\eta^2$, by H\"older's inequality and Lemma 2.5 in \cite{HzLj2018jga}, we have
\begin{align}\label{0827-1}
\left|T_j f\right|
&\lesssim \left(\int_{\D} |f(z)|^2d\mu(z)\right)^\frac{1}{2}
\lesssim \|f\|_{A_\eta^2}.
\end{align}
So, $T_j:A_\eta^2\to \CC$ is bounded. Therefore,  $\T^\om_{\mu,k}$ is bounded   on $A_\om^2$.

{\it (ii).}
Suppose $\T_{\mu,k}^\om:A_\om^2\to A_\om^2$ is compact.
Let $b_z^\eta=\frac{B_z^\eta}{\|B_z^\eta\|_{A_\eta^2}}$.
By Corollary 2 in \cite{PjRj2016jmpa}, $\|B_z^\eta\|_{A_\eta^2}^2\approx \frac{1}{(1-|z|)^2\eta(z)}$.
Then $\{b_z^\eta\}_{z\in\D}$ is bounded in $A_\eta^2$ and converges to 0 uniformly on compact subsets of $\D$ as $|z|$ approaches 1.
Replacing $B_z^\eta$ by $b_z^\eta$ and by Lemma \ref{0425-2} and (\ref{0705-1}), we get
$$\lim\limits_{|z|\to 1}\frac{\mu(\D(z,r))}{(1-|z|)^{2k+1}\hat{\om}(z)}=0.$$

Conversely, suppose
\begin{align*}
\lim\limits_{|z|\to 1}\frac{\mu(\D(z,r))}{(1-|z|)^{2k+1}\hat{\om}(z)}=0.
\end{align*}
By Theorem 13 in \cite{PjaRjSk2018jga}, $\T_\mu^\eta$ is compact on $A_\eta^2$.
Since $M_k$ is bounded on $A_\eta^2$, $M_k^*(\T_\mu^\eta)^*$ is  compact  on $A_\eta^2$.
Thus, by (\ref{0513-2}), $T_k$  is  compact  on $A_\eta^2$.

Meanwhile, by Lemma 2.15 in \cite{HzLj2018jga}, $Id: A_\eta^2\to L_\mu^2$ is compact.
By Lemma \ref{0425-2} and (\ref{0827-1}), $T_j:A_\eta^2\to \CC$ is compact.
Therefore,  $\T^\om_{\mu,k}$ is  compact on $A_\om^2$.


{\it (iii).}
For any $f\in A_\om^2$, let
$$Tf(z)=\int_\D f^{(k)}(w)\ol{B_z^\upsilon(w)}d\mu(w).$$ 
Suppose that $\lambda_{j+1}$  is  the $j+1$-th singular value of $\T^\om_{\mu,k}$ on $A_\om^2$.
By Lemma \ref{0517-1}, we have
\begin{align*}
\lambda_{j+1}
&=\min_{f_1,f_2,\cdots,f_j\in A_\om^2}\max\left\{\|\T^\om_{\mu,k}f\|_{A_\om^2}:\|f\|_{A_\om^2}=1,\langle f,f_i\rangle_{A_\om^2}=0, 1\leq i\leq j\right\}.
\end{align*}
Since  $\T^\om_{\mu,k}=M_k T$ and $M_k$ is bounded and bounded below on $A_\om^2$,
\begin{align*}
\lambda_{j+1}
&\approx \min_{f_1,f_2,\cdots,f_j\in A_\om^2}\max\left\{\|Tf\|_{A_\om^2}:\|f\|_{A_\om^2}=1,\langle f,f_i\rangle_{A_\om^2}=0, 1\leq i\leq j\right\}.
\end{align*}
Therefore,
\begin{align}
\T^\om_{\mu,k}\in \mcS_p(A_\om^2) \Leftrightarrow T\in \mcS_p(A_\om^2). \label{0513-1}
\end{align}

For any $f\in H(\D)$, recall that $I f(z)=\int_0^z f(\xi)d\xi$ and $I^k$ is the $k$-th iteration of $I$.
By Lemma \ref{0413-2}, $I^k:A_\eta^2\to A_\om^2$ is bounded and bounded below.
For $j=0,1,2,\cdots$, let $\mathcal{F}_{j}(A_\om^2)$ be the set of all linear operators on $A_\om^2$ with rank less than or equals to $j$.
For any $f\in A_\om^2$ and $F\in \mathcal{F}_{j}(A_\om^2)$, by Lemma \ref{0413-2}, we have
\begin{align*}
\|(Tf)^{(k)}-(Ff)^{(k)}\|_{A_\eta^2}^2\lesssim \|(T-F)f\|_{A_\om^2}^2,
\end{align*}
which means
$$\|D^k T -D^k F\|_{A_\om^2\to A_\eta^2}\lesssim \|T-F\|_{A_\om^2\to A_\om^2},$$
and
$$\|D^k T I^k -D^k F I^k\|_{A_\eta^2\to A_\eta^2}\lesssim \|T-F\|_{A_\om^2\to A_\om^2}.$$
For any $F\in\mathcal{F}_j(A_\om^2)$, $D^k F I^k\in\mathcal{F}_{j}(A_\eta^2)$.
Since $D^k T I^k=T_k$, by Lemma \ref{0517-1},
\begin{align}\label{0517-2}
T\in \mcS_p(A_\om^2) \Rightarrow T_k\in\mcS_p(A_\eta^2).
\end{align}

For any $f\in H(\D)$, let $\hat{f}_j$ be the $j$-th Taylor coefficient  of $f$  and
$$A_\om^2(k)=\{f\in A_\om^2:\hat{f}_0=\cdots=\hat{f}_{k-1}=0\},$$
$$T_{+}f(z)
=\sum_{j=k}^\infty \frac{z^j}{2\upsilon_{2j+1}}\int_\D f^{(k)}(w)\ol{w^j}d\mu(w),$$
and
$$T_{-}f(z)
=\sum_{j=0}^{k-1} \frac{z^j}{2\upsilon_{2j+1}}\int_\D f^{(k)}(w)\ol{w^j}d\mu(w).
 $$
Then, we have:
\begin{enumerate}
  \item[(a)] $T=T_{+}+T_{-}$ and $T_{-}\in\mcS_p(A_\om^2)$;
  \item[(b)] $A_\om^2(k)$ is a Hilbert space under the  inner product $\langle \cdot,\cdot\rangle_{A_\om^2}$;
  \item[(c)] $\inf\left\{\|T_{+}-F\|_{A_\om^2\to A_\om^2}:F\in \mathcal{F}_j(A_\om^2)\right\}
  \leq  \inf\left\{\|T_{+}-F\|_{A_\om^2(k)\to A_\om^2(k)}:F\in \mathcal{F}_j(A_\om^2(k))\right\}$.
\end{enumerate}
 Here,  we only need to prove (c). For any $F\in \mathcal{F}_j(A_\om^2(k))$,  let $F^\p=FP_k$, in which
 $$(P_kf)(z)=\sum_{j=k}^\infty \hat{f}_j z^j,\,\,\,\mbox{ for all } \,\,f\in H(\D).$$
Then, $F^\p\in \mathcal{F}_j(A_\om^2)$  and
\begin{align*}
\|T_{+}-F^\p\|_{A_\om^2\to A_\om^2}
=\sup_{f\in A_\om^2} \frac{\|T_{+}f-F^\p f\|_{A_\om^2}}{\|f\|_{A_\om^2}}
\leq \sup_{f\in A_\om^2(k)} \frac{\|T_{+}f-F f\|_{A_\om^2}}{\|f\|_{A_\om^2}}.
\end{align*}
Therefore, (c) holds. By Lemma \ref{0517-1}, if $T_{+}\in\mcS_p(A_\om^2(k))$, $T_{+}\in \mcS_p(A_\om^2)$.

It is easy to check that both of $I^k:A_\eta^2\to A_\om^2(k)$  and $D^k: A_\om^2(k)\to A_\eta^2$ are   bounded and  bijective.
Let  $(I^k)^{-}$ and $(D^k)^-$  be the inverse operators of  $I^k:A_\eta^2\to A_\om^2(k)$  and $D^k: A_\om^2(k)\to A_\eta^2$, respectively.
For any $f\in A_\om^2(k)$, by (\ref{0706-1}),  we have
\begin{align*}
D^k T_+ I^k f(z)
&=\sum_{j=k}^\infty\frac{j!}{(j-k)!}\frac{z^{j-k}}{2\upsilon_{2j+1}}\int_\D f(w)\overline{w^j}d\mu(w)\\
&=\int_\D f(w)D^k B_w^\upsilon(z)d\mu(w)
=\int_\D \overline{w}^k f(w)B_w^\eta(z)d\mu(w)=T_k f(z).
\end{align*}
Thus, for any $F\in \mathcal{F}_j(A_\eta^2)$, we have $(D^k)^- F (I^k)^-\in \mathcal{F}_j(A_\om^2(k))$ and
\begin{align*}
\|T_+ - (D^k)^- F (I^k)^-\|_{A_\om^2(k)\to A_\om^2(k)}
&=\|(D^k)^- T_k (I^k)^- - (D^k)^- F (I^k)^-\|_{A_\om^2(k)\to A_\om^2(k)}\\
&\lesssim \| T_k - F \|_{A_\eta^2\to A_\eta^2}.
\end{align*}
By Lemma \ref{0517-1} and statement (a),  
\begin{align}\label{0517-3}
T_k\in \mcS_p(A_\eta^2)
\Rightarrow T_+\in \mcS_p(A_\om^2(k))
\Rightarrow T_+\in \mcS_p(A_\om^2)
\Rightarrow T\in \mcS_p(A_\om^2).
\end{align}
By (\ref{0513-2}),  (\ref{0513-1}), (\ref{0517-2}) and (\ref{0517-3}),  we have
\begin{align}\label{0706-2}
\T_\mu^\eta M_k \in\mcS_p(A_\eta^2)
\Leftrightarrow T_k\in\mcS_p(A_\eta^2)
\Leftrightarrow T\in \mcS_p(A_\om^2)
\Leftrightarrow \T^\om_{\mu,k}\in \mcS_p(A_\om^2).
\end{align}

For any $f=\sum_{j=0}^\infty \hat{f}_jz^j$, let $e_j(z)=z^j$ for $j=0,1,2\cdots$,
$$J_{k,1} f(z)=\sum_{j=k}^\infty\hat{f}_j z^{j-k}, \,\,J_{k,2} f(z)=\sum_{j=0}^{k-1} \hat{f}_j(\T_\mu^\eta e_j)(z).$$
Then we have the following statements:
\begin{enumerate}
  \item[(d)] $J_{k,1}$ is bounded on $A_\eta^2$;
  \item[(e)] the rank of $J_{k,2}$  is  finite  and therefore $J_{k,2}\in \mcS_p(A_\eta^2)$;
  \item[(f)] $\T_\mu^\eta= J_{k,2}+\T_\mu^\eta M_k J_{k,1}$.
\end{enumerate}
So, $\T_\mu^\eta M_k\in \mcS_p(A_\eta^2)$ implies $\T_\mu^\eta\in \mcS_p(A_\eta^2)$. Thus,
$$T_\mu^\eta M_k\in \mcS_p(A_\eta^2)\Leftrightarrow\T_\mu^\eta\in \mcS_p(A_\eta^2).$$
Then, (\ref{0706-2}) deduces that
$$\T^\om_{\mu,k}\in \mcS_p(A_\om^2)\Leftrightarrow\T_\mu^\eta\in \mcS_p(A_\eta^2).$$
By Theorem 8 in \cite{PjaRj2016}, we get the desired result. The proof is complete.
\end{proof}

For any $m\in\N$ and $p>0$, the Besov type space $B_{p,m}$ consists of all $f\in H(\D)$ such that
$$\|f\|_{B_{p,m},*}=\left(\int_\D |f^{(m)}(z)|^p(1-|z|)^{mp} \frac{dA(z)}{(1-|z|)^2}\right)^\frac{1}{p}<\infty. $$
Obviously, $\|\cdot\|_{B_{p,m},*}$ is a semi-norm on $B_{p,m}$.
When $mp\leq 1$, $f\in B_{p,m}$ if and only if $f^{(m)}\equiv 0$.
More information about $B_{p,m}$ can be seen in Section 5.3 in \cite{zhu}.

\begin{Corollary}\label{0831-1}
Suppose  $k,n\in\mathbb{Z}$ satisfying $0\leq k<n$ and  $\om\in\hD$.
Let $g\in H(\D)$ such that $T_{g}^{n,k}:A_\om^2\to A_\om^2$ is bounded.
Then,  $T_g^{n,k}\in\mcS_p(A_\om^2)$ if and only if $g\in B_{p,n-k}$.
\end{Corollary}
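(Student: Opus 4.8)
The plan is to reduce the Schatten-class membership of $T_g^{n,k}$ on $A_\om^2$ to that of a suitable generalized Toeplitz operator $\T_{\mu,k}^\om$, and then invoke Theorem \ref{0514-1}(iii) to obtain a sum condition which we finally identify with the Besov-type condition $g\in B_{p,n-k}$. First I would recall from Lemma \ref{0413-2} that
$$\|T_g^{n,k}f\|_{A_\om^2}^2\approx \sum_{j=0}^{k-1}|f^{(j)}(0)|^2\cdot 0+\int_\D |f^{(k)}(u)|^2|g^{(n-k)}(u)|^2(1-|u|)^{2n-2}\om^*(u)\,dA(u),$$
where the finite-rank part contributes nothing because $(T_g^{n,k}f)^{(j)}(0)=0$ for $j<n$; since $\om^*(u)\approx (1-|u|)\hat\om(u)$, this says $\|T_g^{n,k}f\|_{A_\om^2}^2\approx \int_\D |f^{(k)}(u)|^2\,d\mu(u)$ with $d\mu(u)=|g^{(n-k)}(u)|^2(1-|u|)^{2n-1}\hat\om(u)\,dA(u)$. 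The key observation is that this is exactly the quadratic form of the generalized Toeplitz operator $\T_{\mu,k}^\om$: indeed $\langle \T_{\mu,k}^\om f,f\rangle_{A_\om^2}=\int_\D |f^{(k)}(w)|^2\,d\mu(w)$. Hence $T_g^{n,k}$ and the positive operator $\T_{\mu,k}^\om$ have comparable quadratic forms, so by the standard fact that positive operators with comparable forms lie in the same Schatten class (or more carefully, $(T_g^{n,k})^*T_g^{n,k}\approx \T_{\mu,k}^\om$ as positive operators), we get $T_g^{n,k}\in\mcS_p(A_\om^2)\iff \T_{\mu,k}^\om\in\mcS_{2p}$... — here I must be careful about whether the comparison is at the level of $T$ or $T^*T$, and I would phrase it as: $T_g^{n,k}\in\mcS_p(A_\om^2)$ iff $\T_{\mu,k}^\om\in\mcS_{p/2}(A_\om^2)$ is wrong dimensionally; rather, since $\T_{\mu,k}^\om\ge 0$ and its form equals $\|T_g^{n,k}f\|^2$, the singular values of $\T_{\mu,k}^\om$ are comparable to the squares of those of $T_g^{n,k}$, giving $T_g^{n,k}\in\mcS_p\iff\T_{\mu,k}^\om\in\mcS_{p/2}$. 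I would instead apply Theorem \ref{0514-1} directly to $\T_{\mu,k}^\om$ and track the exponent.

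Next I would apply Theorem \ref{0514-1}(iii): with $\{a_j\}$ an $r$-lattice, $\T_{\mu,k}^\om\in\mcS_q(A_\om^2)$ iff
$$\sum_{j=1}^\infty\left(\frac{\mu(\D(a_j,5r))}{(1-|a_j|)^{2k+1}\hat\om(a_j)}\right)^q<\infty.$$
Now $\mu(\D(a_j,5r))=\int_{\D(a_j,5r)}|g^{(n-k)}(u)|^2(1-|u|)^{2n-1}\hat\om(u)\,dA(u)\approx |g^{(n-k)}(a_j)|^2(1-|a_j|)^{2n-1}\hat\om(a_j)(1-|a_j|)^2$ up to replacing the pointwise value by an average over the Bergman disk (using subharmonicity of $|g^{(n-k)}|^2$ and the comparability of $\hat\om$, $(1-|u|)$ on $\D(a_j,5r)$ since $\hat\om\in\R$ — wait, $\hat\om\in\R$ holds when $\om\in\hD$, which is given). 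Therefore the quotient is $\approx |g^{(n-k)}(a_j)|^2(1-|a_j|)^{2(n-k)}$, so the condition becomes $\sum_j\big(|g^{(n-k)}(a_j)|(1-|a_j|)^{n-k}\big)^{2q}<\infty$. Matching the exponent from the $T^*T$ reduction ($q=p/2$), this reads $\sum_j\big(|g^{(n-k)}(a_j)|(1-|a_j|)^{n-k}\big)^{p}<\infty$, which is precisely the lattice characterization of $g\in B_{p,n-k}$ (the Besov-type space defined via $\int_\D|g^{(m)}|^p(1-|z|)^{mp}\,dA/(1-|z|)^2$ with $m=n-k$), by the standard discretization of such Besov-type integrals over an $r$-lattice — one reference is Section 5.3 of \cite{zhu}.

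I expect the main obstacle to be making the reduction from $T_g^{n,k}$ to the Toeplitz operator fully rigorous: the identity $\|T_g^{n,k}f\|_{A_\om^2}^2\approx\langle\T_{\mu,k}^\om f,f\rangle_{A_\om^2}$ shows the forms are comparable, but Schatten membership is not determined by the form alone unless one knows $(T_g^{n,k})^*T_g^{n,k}$ is comparable to $\T_{\mu,k}^\om$ as operators; however, for positive operators $A,B$ with $\langle Af,f\rangle\approx\langle Bf,f\rangle$ for all $f$, one does have $\mcS_r$-membership equivalence for all $r>0$ (via the min–max characterization of singular values, exactly as in Lemma \ref{0517-1}), so this is in fact legitimate, and I would spell out that argument carefully using Lemma \ref{0517-1}(ii). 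A secondary technical point is verifying that $\mu$ defined above indeed satisfies the boundedness hypothesis of Theorem \ref{0514-1} whenever $T_g^{n,k}:A_\om^2\to A_\om^2$ is bounded — but this is immediate from the form identity and part (i) of Theorem \ref{0514-1}. The remaining steps — the sub-mean-value estimate for $\mu(\D(a_j,5r))$ and the lattice discretization of the $B_{p,n-k}$-seminorm — are routine given $\om\in\hD\Rightarrow\hat\om\in\R$ and the standard Bergman-disk geometry, so I would not grind through them.
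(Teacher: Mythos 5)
Your proposal is correct and follows essentially the same route as the paper: identify $(T_g^{n,k})^*T_g^{n,k}$ with the generalized Toeplitz operator $\T_{\mu,k}^\om$ for $d\mu=|g^{(n-k)}(z)|^2(1-|z|)^{2n-1}\hat{\om}(z)\,dA(z)$, apply Theorem \ref{0514-1}(iii) with exponent $p/2$, and convert the resulting lattice sum into the $B_{p,n-k}$ condition by subharmonicity. The only difference is that the paper obtains the exact operator identity $(T_g^{n,k})^*T_g^{n,k}=2^{2n}\T_{\mu,k}^\om$ by testing against reproducing kernels (so no Courant--Fischer form-comparison is needed), and it writes out the two subharmonicity estimates that you defer as routine.
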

\begin{proof}
For any $f,h\in A_\om^2$, from Lemma \ref{0413-2}, we have
\begin{align*}
\langle (T_g^{n,k})^*T_g^{n,k}f,h\rangle_{A_\om^2}
&=\langle T_g^{n,k}f,  T_g^{n,k} h\rangle_{A_\om^2}\\
&=2^{2n}\int_\D f^{(k)}(z)\ol{h^{(k)}(z)}|g^{(n-k)}(z)|^2\om^{*n}(z)dA(z).
\end{align*}
Here, $\om^{*n}$ is the $n$-th iteration of $\om^*$, that is, $\om^{*n}=(((\om^*)^*)^{\cdots})^*$.
Obviously, for any given $\varepsilon>0$, $\om^{*n}(z)\approx (1-|z|)^{2n-1}\hat{\om}(z)$ for all $|z|>\varepsilon$.

Replacing   $h$ by $B_\xi^\om$, the reproducing kernel  of $A_\om^2$, then
$$((T_g^{n,k})^*T_g^{n,k}f)(\xi)=2^{2n}\int_\D f^{(k)}(z)\ol{B_\xi^{\om,k}(z)}|g^{(n-k)}(z)|^2\om^{*n}(z)dA(z).$$
Let $d\mu(z)=|g^{(n-k)}(z)|^2\om^{*n}(z)dA(z)$. From Theorem \ref{0514-1}, we have $(T_g^{n,k})^*T_g^{n,k}\in \mcS_\frac{p}{2}(A_\om^2)$ if and only if
$$\sum\limits_{j=1}^\infty \left(\frac{\mu(\D(a_j,5r))}{(1-|a_j|)^{2k+1}\hat{\om}(a_j)}\right)^\frac{p}{2}<\infty,$$
which equals  to
\begin{align}
\sum_{j=1}^\infty \left(\int_{\D(a_j,5r)}|g^{(m)}(z)|^2(1-|z|)^{2m}\frac{dA(z)}{(1-|z|)^2}\right)^\frac{p}{2}<\infty.\label{0514-2}
\end{align}
Here, $m=n-k$ and $\{a_j\}$ is a $r$-lattice of $\D$ such that $\inf|a_j|>0$.

If (\ref{0514-2}) holds, it is easy to check that
$$ \sum_{j=1}^\infty \left(\int_{\D(a_j,6r)} |g^{(m)}(\xi)|^2(1-|\xi|)^{2m}\frac{dA(\xi)}{(1-|\xi|)^2} \right)^\frac{p}{2}<\infty.$$
By the subharmonicity of $|g^{(m)}|^2$, we have
\begin{align*}
\|g\|_{B_{p,m},*}^p
&\approx \sum_{j=1}^\infty \int_{\D(a_j,5r)} |g^{(m)}(z)|^p(1-|z|)^{mp}\frac{dA(z)}{(1-|z|)^2}  \\
&\lesssim \sum_{j=1}^\infty \int_{\D(a_j,5r)} \left(\int_{D(z,r)}|g^{(m)}(\xi)|^2\frac{dA(\xi)}{(1-|\xi|)^2}\right)^\frac{p}{2}(1-|z|)^{mp}\frac{dA(z)}{(1-|z|)^2}  \\
&\lesssim \sum_{j=1}^\infty \left(\int_{\D(a_j,6r)} |g^{(m)}(\xi)|^2(1-|\xi|)^{2m}\frac{dA(\xi)}{(1-|\xi|)^2} \right)^\frac{p}{2}
     \int_{D(a_j,5r)} \frac{dA(z)}{(1-|z|)^2}  \\
&\approx \sum_{j=1}^\infty \left(\int_{\D(a_j,6r)} |g^{(m)}(\xi)|^2(1-|\xi|)^{2m}\frac{dA(\xi)}{(1-|\xi|)^2} \right)^\frac{p}{2}<\infty.
\end{align*}

Suppose $g\in B_{p,m}$. Let $\xi_j\in\ol{\D(a_j,5r)}$ such that $|g^{(m)}(\xi_j)|=\sup\limits_{z\in \D(a_j,5r)}|g^{(m)}(z)|$.
By the subharmonicity of $|g^{(m)}|^p$, we have
\begin{align*}
\left(\int_{\D(a_j,5r)}|g^{(m)}(z)|^2(1-|z|)^{2m}\frac{dA(z)}{(1-|z|)^2}\right)^\frac{p}{2}
\lesssim&  |g^{(m)}(\xi_j)|^p(1-|\xi_j|^2)^{mp}\\
\lesssim&  \int_{\D(\xi_j,r)}|g^{(m)}(z)|^p(1-|z|^2)^{mp-2}dA(z)\\
\leq & \int_{\D(a_j,6r)}|g^{(m)}(z)|^p(1-|z|^2)^{mp-2}dA(z),
\end{align*}
and
\begin{align*}
\sum_{j=1}^\infty\left(\int_{\D(a_j,5r)}|g^{(m)}(z)|^2(1-|z|)^{2m}\frac{dA(z)}{(1-|z|)^2}\right)^\frac{p}{2}
\lesssim \|g\|_{B_{p,m},*}^p
<\infty.
\end{align*}
The proof is completed.
\end{proof}

\section{More discussions}
Let $H^2$ be the Hardy space, which is also  written as $A_{-1}^2$.
For any $f,g\in H^2$, the inner product of $f$ and $g$ is defined by
$$\langle f,g\rangle_{H^2}=\sum_{k=0}^\infty \hat{f}_k \ol{\hat{g}}_k.$$
So,  $B_z^{-1}(w)=\frac{1}{1-\ol{z}w}$ is the reproducing kernel of $H^2$.

Then, for any $k=0,1,2,\cdots$ and positive Borel measure $\mu$,  we can define  the generalized Toeplitz operator  $\T_{\mu,k}^{-1}$ on $H^2$, that is, for any $f\in H^2$,
$$\T_{\mu,k}^{-1}f(z)=\int_\D f^{(k)}(w)\ol{D^kB_z^{-1}(w)}d\mu(w).$$
When $k=0$, $\T_{\mu,k}^{-1}$ is the Toeplitz operator $\T_\mu$ on $H^2$  defined in \cite{Ld1987jfa}.
In that paper, Luecking described all these  $\mu$ such that $\T_\mu\in\mathcal{S}(H^2)$.

In  the proof of Theorem \ref{0514-1}, when $k\in\N$,  taking
\begin{align*}
\upsilon(z)=c_1(1-|z|^2)^{k-1}, \,\,\,\eta(z)=c_2(1-|z|^2)^{2k-1}
\end{align*}
for some appropriate positive constant $c_1,c_2$ and substituting  $(1-|z|^2)^{-1}$ for   $\om(z)$,
noting that $A_{-1}^2=H^2$ and $B_z^\om=B_z^{-1}$,
we get the characterizations of $\T_{\mu,k}^{-1} \in \mathcal{S}_p(H^2)$.
That is Theorem \ref{0830-4}.

\begin{theorem}\label{0830-4}
Suppose $k\in\N\cup\{0\}$ and $0<p<\infty$. Let $0<r<\infty$ and $\{a_j\}_{j=1}^\infty$ be a $r$-lattice of \,$\D$.
\begin{enumerate}[(i)]
  \item $\T^{-1}_{\mu,k}$ is bounded on $H^2$ if and only if  $\sup\limits_{z\in\D}\frac{\mu(\D(z,r))}{(1-|z|)^{2k+1}}<\infty$;
  \item  $\T^{-1}_{\mu,k}$ is compact on $H^2$ if and only if   $\lim\limits_{|z|\to 1}\frac{\mu(\D(z,r))}{(1-|z|)^{2k+1}}=0$;
  \item  $\T^{-1}_{\mu,k}\in \mcS_p(H^2)$ if and only if
  $\sum\limits_{j=1}^\infty \left(\frac{\mu(\D(a_j,5r))}{(1-|a_j|)^{2k+1}}\right)^p<\infty.$
\end{enumerate}
\end{theorem}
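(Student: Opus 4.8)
The plan is to transcribe the proof of Theorem \ref{0514-1}, replacing $B_z^\om$ by the Hardy reproducing kernel $B_z^{-1}(w)=(1-\ol{z}w)^{-1}$, and to treat $k=0$ and $k\ge1$ separately. For $k=0$ the operator $\T^{-1}_{\mu,0}$ is the classical Toeplitz operator $\T_\mu$ on $H^2$: parts (i) and (ii) are the Carleson-measure and vanishing-Carleson-measure characterizations for $H^2$, and (iii) is Luecking's trace-ideal theorem from \cite{Ld1987jfa} (the exponent $2k+1=1$ being the correct normalization for $H^2$). So assume $k\ge1$ from now on.

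The first step is the Hardy analogue of Lemma \ref{0510-3}. Differentiating $B_z^{-1}(w)=\sum_{j\ge0}(\ol{z}w)^j$ in $w$ term by term gives
\begin{align*}
D^kB_z^{-1}(w)=\ol{z}^k\sum_{j=k}^\infty\frac{j!}{(j-k)!}(w\ol{z})^{j-k}=\ol{z}^k\,\frac{k!}{(1-w\ol{z})^{k+1}}=\ol{z}^k\,B_z^\upsilon(w),
\end{align*}
where $\upsilon(z)=c_1(1-|z|^2)^{k-1}$ is a standard, hence regular, weight and $B_z^\upsilon$ is its reproducing kernel; differentiating once more, now in $z$, yields $D^kB_w^\upsilon(z)=\ol{w}^kB_w^\eta(z)$ with $\eta(z)=c_2(1-|z|^2)^{2k-1}\in\R$, for suitable $c_1,c_2>0$. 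Since $\upsilon,\eta\in\R$, every Bergman-space tool used in the proof of Theorem \ref{0514-1} is available for $A_\upsilon^2$ and $A_\eta^2$: Lemma \ref{0413-2}, the kernel-norm estimate $\|B_z^\eta\|_{A_\eta^2}^2\approx((1-|z|)^2\eta(z))^{-1}$ of \cite{PjRj2016jmpa}, and the Toeplitz-operator theory of \cite{PjaRjSk2018jga} and \cite{PjaRj2016}. In place of the weighted-norm identity I would use the classical Littlewood--Paley equivalence $\|f\|_{H^2}^2\approx\sum_{j=0}^{k-1}|f^{(j)}(0)|^2+\|f^{(k)}\|_{A_\eta^2}^2$, and in place of ``$M_k$ is bounded and bounded below'' the (stronger but trivial) fact that $M_k$ is an isometry of $H^2$.

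With these substitutions the argument follows the proof of Theorem \ref{0514-1} essentially verbatim. Writing $\T^{-1}_{\mu,k}=M_kT$ with $Tf(z)=\int_\D f^{(k)}(w)\ol{B_z^\upsilon(w)}\,d\mu(w)$, the norm equivalence reduces the boundedness, compactness and $\mcS_p$-membership of $\T^{-1}_{\mu,k}$ on $H^2$ to the same property for the finitely many functionals $T_jf=\int_\D f(w)\ol{w}^j\,d\mu(w)$, $0\le j<k$ --- a finite-rank perturbation, controlled by the elementary embedding $H^2\hookrightarrow L_\mu^2$ --- together with the Bergman-type operator $T_kf(z)=\int_\D\ol{w}^kf(w)B_w^\eta(z)\,d\mu(w)$ on $A_\eta^2$. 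Splitting $T=T_++T_-$ as in the proof of Theorem \ref{0514-1} and using that $D^k$ and $I^k$ are bounded bijections between $A_\eta^2$ and $\{f\in H^2:\hat f_0=\cdots=\hat f_{k-1}=0\}$, one reduces (up to taking adjoints and finite-rank perturbations, exactly as there) to $\T_\mu^\eta M_k$ on $A_\eta^2$, so that
\begin{align*}
\T^{-1}_{\mu,k}\in\mcS_p(H^2)\ \Longleftrightarrow\ \T_\mu^\eta M_k\in\mcS_p(A_\eta^2)\ \Longleftrightarrow\ \T_\mu^\eta\in\mcS_p(A_\eta^2),
\end{align*}
and analogously for boundedness and compactness. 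As $\eta\in\R$ with $\eta(S_a)\approx(1-|a|)^{2k+1}$, Theorems 1 and 13 of \cite{PjaRjSk2018jga} give (i) and (ii) and Theorem 8 of \cite{PjaRj2016} gives (iii).

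The step needing the most care is not any single inequality but the legitimacy of the formal substitution $\om(z)=(1-|z|^2)^{-1}$: this ``weight'' is not integrable and $H^2$ is not literally one of the spaces $A_\om^p$, so every appeal in the proof of Theorem \ref{0514-1} to Lemma \ref{0413-2}, Lemma \ref{0425-1} or Lemma \ref{0425-2} must be re-proved in its Hardy-space form. Concretely one must verify the $H^2$ Littlewood--Paley equivalence with $k$ derivatives, the (here trivial) boundedness and boundedness-below of $M_k$ on $H^2$, and the bijectivity of $I^k\colon A_\eta^2\to\{f\in H^2:\hat f_j=0,\ 0\le j<k\}$; once these are in hand the chain of reductions above goes through unchanged, and the denominator $(1-|a_j|)^{2k+1}\approx\eta(S_{a_j})$ in the final condition appears because the moment sequence of $H^2$ forces $\hat\om$ to be replaced by a constant.
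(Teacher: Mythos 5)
Your proposal is correct and follows essentially the same route as the paper: the paper's own proof of Theorem \ref{0830-4} is precisely the remark that one substitutes $\upsilon(z)=c_1(1-|z|^2)^{k-1}$ and $\eta(z)=c_2(1-|z|^2)^{2k-1}$ into the proof of Theorem \ref{0514-1}, uses $A_{-1}^2=H^2$ with kernel $B_z^{-1}$, and invokes Luecking's theorem for $k=0$. If anything, you are more careful than the paper, since you explicitly flag that the formal substitution $\om(z)=(1-|z|^2)^{-1}$ must really be implemented via the Hardy Littlewood--Paley identity and $\eta(S_a)\approx(1-|a|)^{2k+1}$ rather than via a literal (non-integrable) weight.
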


Let $\tau(z)=\log\frac{1}{|z|}$. Then,
\begin{align*}
\|f\|_{H^2}^2= |f(0)|^2+4\int_\D |f^\p(z)|^2\tau(z)dA(z).
\end{align*}
So, for any $f\in H^2$ and $w\in\D$, from  Lemma \ref{0413-2}, we have
\begin{align*}
(T_g^{n,k})^*T_g^{n,k}f(w)
&=\langle T_g^{n,k}f,  T_g^{n,k} B_w^{-1}\rangle_{H^2}\\
&=2^{2n}\int_\D f^{(k)}(z)\ol{D^kB_w^{-1}(z)}|g^{(n-k)}(z)|^2\tau^{*(n-1)}(z)dA(z).
\end{align*}
Thus, as we got Corollary \ref{0831-1}, we obtain  the following result.

\begin{Corollary}
Suppose  $k,n\in\mathbb{Z}$ satisfying $0\leq k<n$.
Let $g\in H(\D)$ such that $T_{g}^{n,k}:H^2\to H^2$ is bounded.
Then,  $T_g^{n,k}\in\mcS_p(H^2)$ if and only if $g\in B_{p,n-k}$.
\end{Corollary}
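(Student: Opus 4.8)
The plan is to reduce the Hardy-space statement to the Bergman-space result already established in Corollary \ref{0831-1}, exactly as the preceding paragraph of the paper suggests. First I would record the reproducing-kernel identity for $H^2=A_{-1}^2$, namely $B_w^{-1}(z)=\frac{1}{1-\ol w z}$, and compute $(T_g^{n,k})^*T_g^{n,k}$ acting on $B_w^{-1}$ via Lemma \ref{0413-2}: expanding the $H^2$ inner product in terms of the $n$-fold iterated weight $\tau^{*(n-1)}$ where $\tau(z)=\log\frac1{|z|}$, one finds
\begin{align*}
(T_g^{n,k})^*T_g^{n,k}f(w)=2^{2n}\int_\D f^{(k)}(z)\,\ol{D^k B_w^{-1}(z)}\,|g^{(n-k)}(z)|^2\tau^{*(n-1)}(z)\,dA(z),
\end{align*}
so that $(T_g^{n,k})^*T_g^{n,k}=\T_{\mu,k}^{-1}$ with $d\mu(z)=|g^{(n-k)}(z)|^2\tau^{*(n-1)}(z)\,dA(z)$. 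This is the Hardy-space analogue of the identity used in the proof of Corollary \ref{0831-1}.

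Next I would invoke Theorem \ref{0830-4}, whose statement is precisely the $H^2$ version of Theorem \ref{0514-1} obtained by the substitution $\om(z)\mapsto(1-|z|^2)^{-1}$, $\upsilon(z)=c_1(1-|z|^2)^{k-1}$, $\eta(z)=c_2(1-|z|^2)^{2k-1}$. Since $T_g^{n,k}\in\mcS_p(H^2)$ if and only if $(T_g^{n,k})^*T_g^{n,k}\in\mcS_{p/2}(H^2)$, Theorem \ref{0830-4}(iii) tells us this holds if and only if
$$\sum_{j=1}^\infty\left(\frac{\mu(\D(a_j,5r))}{(1-|a_j|)^{2k+1}}\right)^{p/2}<\infty$$
for an $r$-lattice $\{a_j\}$ with $\inf_j|a_j|>0$. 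The elementary estimate $\tau^{*(n-1)}(z)\approx(1-|z|)^{2n-1}$ for $|z|$ bounded away from $0$ (the $H^2$ substitute for $\om^{*n}(z)\approx(1-|z|)^{2n-1}\hat\om(z)$) then converts the summand into
$$\left(\int_{\D(a_j,5r)}|g^{(m)}(z)|^2(1-|z|)^{2m}\frac{dA(z)}{(1-|z|)^2}\right)^{p/2},\qquad m=n-k,$$
which is exactly condition (\ref{0514-2}) from the proof of Corollary \ref{0831-1}.

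Finally I would reuse, verbatim, the two-sided argument from Corollary \ref{0831-1} that identifies the finiteness of this lattice sum with membership $g\in B_{p,m}=B_{p,n-k}$: one direction passes from the lattice sum over discs $\D(a_j,5r)$ to the Besov semi-norm $\|g\|_{B_{p,m},*}$ using subharmonicity of $|g^{(m)}|^2$ and the slightly enlarged discs $\D(a_j,6r)$, and the reverse direction passes from $g\in B_{p,m}$ back to the lattice sum by subharmonicity of $|g^{(m)}|^p$ and a pointwise maximum $\xi_j\in\ol{\D(a_j,5r)}$. Since that covering argument is purely geometric and does not see which weight was used, it transfers without change.

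I do not expect any serious obstacle here; the only points requiring a line of care are the computation of $(T_g^{n,k})^*T_g^{n,k}$ on reproducing kernels in $H^2$ (handled by Lemma \ref{0413-2} as in the Bergman case) and the asymptotic $\tau^{*(n-1)}(z)\approx(1-|z|)^{2n-1}$ near the boundary. Everything else is a direct citation of Theorem \ref{0830-4} together with the lattice-sum/Besov equivalence already proved in Corollary \ref{0831-1}, so the proof is essentially a transcription with $\om$ replaced by the Hardy weight. For this reason I would simply write ``as we got Corollary \ref{0831-1}'' and omit the repeated details.
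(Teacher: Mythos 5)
Your proposal is correct and matches the paper's own argument essentially verbatim: the paper likewise computes $(T_g^{n,k})^*T_g^{n,k}$ on $H^2$ via Lemma \ref{0413-2}, identifying it with the generalized Toeplitz operator associated to $d\mu(z)=|g^{(n-k)}(z)|^2\tau^{*(n-1)}(z)\,dA(z)$ where $\tau(z)=\log\frac{1}{|z|}$, and then concludes ``as we got Corollary \ref{0831-1}'' by invoking Theorem \ref{0830-4} together with the same lattice-sum/Besov equivalence. There is no substantive difference in route, and your added remark that $\tau^{*(n-1)}(z)\approx(1-|z|)^{2n-1}$ near the boundary is exactly the estimate the paper leaves implicit.
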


\end{document}